\documentclass[a4paper,11pt]{amsart}
\usepackage{amsmath,amssymb,amsfonts,amsthm,exscale,calc}
\usepackage{latexsym,textcomp}
\usepackage{graphicx,epsf}
\usepackage{a4wide}
\usepackage{dsfont}
\usepackage[german, english]{babel}
\usepackage{hyperref}
\usepackage{enumerate}
\usepackage{tikz}
\usepackage[europeanresistors,americaninductors]{circuitikz}
\usepackage[utf8]{inputenc}

\selectlanguage{english}

\newtheorem{lemma}{Lemma}[section]
\newtheorem{theorem}[lemma]{Theorem}

\newtheorem{proposition}[lemma]{Proposition}
\newtheorem{corollary}[lemma]{Corollary}

\theoremstyle{definition}

\newtheorem{example}[lemma]{Example}
\newtheorem*{remark}{Remark}

\numberwithin{equation}{section}
\newcommand{\comment}[1]{}


\newcommand{\Wm}{{W_{\rm min}}}
\newcommand{\Ws}{{W_{\rm max}}}

\newcommand{\R}{{\mathbb R}}
\newcommand{\IC}{{\mathbb C}}

\newcommand{\N}{{\mathbb N}}

\newcommand{\LL}{{\mathcal{L}}}
\newcommand{\MM}{{\mathcal{M}}}

\newcommand{\supp}{{\mathrm {supp}\,}}

\newcommand{\as}[1]{\langle #1\rangle}

\newcommand{\Hm}[1]{\leavevmode{\marginpar{\tiny%
$\hbox to 0mm{\hspace*{-0.5mm}$\leftarrow$\hss}%
\vcenter{\vrule depth 0.1mm height 0.1mm width \the\marginparwidth}%
\hbox to 0mm{\hss$\rightarrow$\hspace*{-0.5mm}}$\\\relax\raggedright
#1}}}


\begin{document}

\title[Surjectivity of discrete operators]{A note on the surjectivity of operators on vector bundles over discrete spaces}

\author[Koberstein]{Jannis Koberstein}
\address{J. Koberstein, Mathematisches Institut \\Friedrich Schiller Universität Jena \\07743 Jena, Germany }\email{jannis.koberstein@uni-jena.de}

\author[Schmidt]{Marcel Schmidt}
\address{M. Schmidt, Mathematisches Institut \\Friedrich Schiller Universität Jena \\07743 Jena, Germany } \email{schmidt.marcel@uni-jena.de}

\begin{abstract}
In this note we give a short and self-contained proof for a  criterion of Eidelheit on the solvability of linear equations in infinitely many variables. We use this criterion to study the surjectivity of magnetic Schrödinger operators on bundles over graphs. 
\end{abstract}


\maketitle



\section{Introduction}

Given a double sequence  $(a_{nm})$  of complex numbers (or of linear operators) it is a fundamental question for which $(\eta_n)$ there exists  $(\xi_n)$ that solves the infinite system of linear equations
$$\sum_{m=1}^\infty a_{nm} \xi_m = \eta_n, \, n \in \N.$$
For continuous linear functionals  $(a_{nm})$  on Fr\'echet spaces a classical result of Eidelheit \cite{Eid} characterizes the solvability of this system of equations for all $(\eta_n)$. In other words it provides a characterization for the surjectivity of the map 
$$(\xi_n) \mapsto (\sum_{m=1}^\infty a_{nm} \xi_m).$$
The proof of Eidelheit's result is based on a general surjectivity criterion for linear operators between Fr\'echet spaces due to Orlicz and Mazur \cite{MO1,MO2}. We refer to \cite{MV} for a presentation of this classical result with modern notation and terminology, which basically follows Eidelheit's original arguments. 

This note has two purposes: We  give a short functional analytic proof for a special case of Eidelheit's theorem, which does not use the previously mentioned surjectivity criterion for operators on Fr\'echet spaces. More precisely, we consider the situation when $(a_{nm})$ is a sequence of linear operators between finite dimensional vector spaces with finite hopping range (that is for each $n\in \N$ we assume $a_{nm} = 0$ for all but finitely many $m$), see Theorem~\ref{theorem:main}. Secondly, we apply the theorem to obtain a criterion for the surjectivity of magnetic Schrödinger operators on bundles over   graphs, see Theorem~\ref{thm:graph}.  A particular consequence of our presentation is a short and self-contained  proof for the surjectivity of the (weighted) graph Laplacian of an infinite connected locally finite graph, which uses the bipolar theorem as the only nontrivial ingredient from functional analysis. 

The surjectivity of graph Laplacians has recently received some attention. Based on surjectivity results for linear cellular automata from \cite{CC2}, the surjectivity of the graph Laplacian was first established for Cayley graphs of finitely generated infinite amenable groups in \cite{CC1} and then extended to arbitrary connected locally finite infinite graphs in \cite{CCD}.    The proofs in these papers are based on a Mittag-Leffler argument, which is well known to have applications in all kinds of surjectivity problems, see e.g. \cite{Wen} and references therein. With the same arguments a version of Eidelheit's theorem for linear cellular automata was proven in \cite{Toi}. There it is called a 'Garden of Eden theorem'\footnote{This comes from the observation that initial states which are not in the image of a given cellular automaton can never be attained after iterating it.   A garden of Eden theorem is then a theorem that provides criteria for the non-existence of such states, that is, criteria for the surjectivity of the automaton.}. It seems that the authors of the previously mentioned papers were not aware of Eidelheit's theorem and  it was only noted in \cite{Kal} that Eidelheit's theorem can be applied to obtain the surjectivity of graph Laplacians and other discrete operators that satisfy a pointwise maximum principle. 

As mentioned above, we prove Eidelheit's surjectivity criterion for finite hopping range operators on finite-dimensional vector bundles over infinite discrete set.  It includes the result for linear cellular automata from \cite{Toi} but can also be applied to magnetic Schrödinger operators on bundles, which were recently introduced in \cite{GMT}. Our result for magnetic Schrödinger operators (Theorem~\ref{thm:graph})  contains the aforementioned results for graph Laplacians and discrete Schrödinger operators (graph Laplacian plus real potential). Instead of a pointwise maximum principle as in \cite{Kal} we assume the nonnegativity of a certain quadratic form associated with the magnetic Schrödinger operator. For scalar discrete Schrödinger operators we give a characterization of the pointwise maximum principle  and, in doing so,  show that our result covers situations where the maximum principle fails. 

This paper is organized as follows. In Section~\ref{section:main results} we introduce the model and discuss the main results. In Section~\ref{section:background} we proof Theorem~\ref{theorem:main} and in Section~\ref{section:applications} we prove Theorem~\ref{thm:graph} and discuss the maximum principle. In Section~\ref{section:examples}  we give several examples that show that we cannot drop any of the assumptions in Theorem~\ref{thm:graph}.

Parts of this paper are based on the first named authors master's thesis.

{\bf Acknowledgements:} The authors are grateful to thank Daniel Lenz for pointing out the problem to them. Moreover, M.S. thanks Jürgen Voigt for an interesting discussion on closed range theorems.

\section{Setup and main results}\label{section:main results}

Let $X\neq \emptyset$ be a countable set. A {\em vector bundle} over $X$ is a family $F = (F_x)_{x \in X}$ of finite-dimensional complex vector spaces. The corresponding {\em space of vector fields} is  
$$\Gamma(X;F) := \prod_{x\in X} F_x = \{f\colon X \to \bigsqcup_{x\in X} F_x \mid f(x) \in F_x \text{ for all }x \in X\}$$
and the subspace of {\em finitely supported vector fields} is given by
$$\Gamma_c(X;F) := \{f \in \Gamma(X;F) \mid \text{ there exists a finite } K\subseteq X \text{ such that } f|_{X \setminus K} = 0\}.$$
For each $x \in X$ we equip $F_x$ with the unique vector space topology on it and for simplicity we fix a complete norm $\|\cdot\|_x$ that induces this topology.  We equip $\Gamma(X;F)$ with the product topology of the family $F = (F_x)_{x \in X}$. It coincides with the locally convex topology generated by the family of seminorms
$$p_K\colon\Gamma(X; F) \to [0,\infty),\quad p_K(f) = \sum_{x \in K} \|f(x)\|_x,\quad K \subseteq X \text{ finite}.$$
It is readily verified that the continuous dual space of $\Gamma(X;F)$ is isomorphic to $\Gamma_c(X;F')$, where $F' = (F_x')_{x \in X}$ is the {\em dual vector bundle} that consists of the dual spaces of the $F_x$, see also Section~\ref{section:background}.

A linear operator $A\colon\Gamma(X;F) \to \Gamma(X;F)$ is continuous if and only if for each $x \in X$ there exists a finite subset $K \subseteq X$ such that $Af(x)$ only depends on $f|_K$. Operators with the latter properties are said to have {\em finite hopping range}. In this case, its {\em dual operator}  $A'\colon\Gamma_c(X;F') \to \Gamma_c(X;F')$ is defined by
$$A' \varphi (f) = \varphi(Af),\quad f \in \Gamma(X;F), \varphi \in\Gamma_c(X;F'). $$

As mentioned above, it is one of the goals of this paper to give a short and self-contained functional analytic proof for the following result. It is a generalization of the corresponding result for linear maps between finite-dimensional spaces.

\begin{theorem}\label{theorem:main}
 Let $A\colon\Gamma(X;F) \to \Gamma(X;F)$ be a continuous linear operator. The following assertions are equivalent.
 \begin{itemize}
  \item[(i)] $A$ is surjective.
  \item[(ii)] The dual operator  $A'\colon\Gamma_c(X;F') \to \Gamma_c(X;F')$ is injective.
 \end{itemize}
\end{theorem}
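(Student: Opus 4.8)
The plan is to dispatch the easy implication (i)$\Rightarrow$(ii) by the usual duality argument and to obtain the substantial implication (ii)$\Rightarrow$(i) by first reducing to finite subsystems and then gluing their solutions together. For (i)$\Rightarrow$(ii), if $A$ is surjective and $A'\varphi=0$, then $\varphi(Af)=0$ for all $f\in\Gamma(X;F)$; since every element of $\Gamma(X;F)$ is of the form $Af$, this forces $\varphi=0$. For the converse, the first observation is that injectivity of $A'$ makes the range of $A$ dense: a functional $\varphi\in\Gamma_c(X;F')$ annihilates $\ran A$ if and only if $\varphi(Af)=0$ for all $f$, i.e. $A'\varphi=0$, so the annihilator of $\ran A$ is exactly $\ker A'=\{0\}$, and by the bipolar theorem $\overline{\ran A}$ is the pre-annihilator of this set, namely all of $\Gamma(X;F)$.

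The second step turns density into solvability of every finite subsystem. For finite $K\subseteq X$ put $N(K)=\bigcup_{x\in K}N(x)$, where $N(x)$ is a finite set on which $Af(x)$ depends; then $Af|_K=\eta|_K$ involves only the coordinates of $f$ in $N(K)$, so it is governed by a linear map $B_K$ between the finite-dimensional spaces $\prod_{y\in N(K)}F_y$ and $\prod_{x\in K}F_x$, and $\ran B_K$ is the image of $\ran A$ under the coordinate projection onto $\prod_{x\in K}F_x$. This projection is continuous, linear and surjective, hence it carries the dense set $\ran A$ to a dense subset; being dense in a finite-dimensional space, $\ran B_K$ is everything, so $B_K$ is surjective. (Alternatively one checks that the transpose of $B_K$ is the restriction of $A'$ to fields supported in $K$, which is injective by (ii) together with the support bound $\supp(A'\varphi)\subseteq N(\supp\varphi)$; this route avoids the bipolar theorem.) Fixing an exhaustion $K_n\uparrow X$ and finite sets $N_1\subseteq N_2\subseteq\cdots$ with $K_n\cup N(K_n)\subseteq N_n$, we thus obtain nonempty finite-dimensional affine solution sets $S_n=\{\,s\in\prod_{y\in N_n}F_y : \text{the equations at }x\in K_n\text{ hold}\,\}$.

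The remaining and main difficulty is to glue these local solutions into a single global one, the obstacle being that an arbitrary $s_n\in S_n$ need not be the restriction of any $s_{n+1}\in S_{n+1}$. Restriction furnishes affine maps $r_n\colon S_{n+1}\to S_n$, and I would resolve the compatibility problem by the finite-dimensional stabilization mechanism underlying the Mittag--Leffler lemma: for fixed $n$ the images $r_{n,m}(S_m)\subseteq S_n$ with $m\ge n$ form a decreasing chain of nonempty affine subspaces, which must stabilize to some nonempty $S_n^\infty$ because their dimensions cannot decrease forever. On these stable images the maps restrict to surjections $S_{n+1}^\infty\to S_n^\infty$, so one can pick $s_0\in S_0^\infty$ and lift successively to a compatible sequence $(s_n)$ with $r_n(s_{n+1})=s_n$. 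Since $\bigcup_n N_n=X$, these patch to a field $\xi\in\Gamma(X;F)$ with $\xi|_{N_n}=s_n$, and by construction $A\xi(x)=\eta(x)$ for every $x\in\bigcup_n K_n=X$, that is $A\xi=\eta$. I expect this gluing to be the crux, and the finite-dimensionality of the fibres to be exactly what makes it work: it is what forces the images to stabilize, whereas for infinite-dimensional fibres a dense range need not be closed.
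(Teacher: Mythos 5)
Your proof is correct, but it takes a genuinely different route from the paper. You resolve (ii)$\Rightarrow$(i) by a Mittag--Leffler-type stabilization of finite-dimensional affine solution sets: surjectivity of each finite subsystem $B_K$ (via the transpose identity $B_K^t\varphi = A'\varphi$ for $\supp\varphi\subseteq K$, using $\supp(A'\varphi)\subseteq N(\supp\varphi)$), followed by the observation that the decreasing chains $r_{n,m}(S_m)$ of nonempty affine subspaces stabilize by dimension count, yielding surjective transition maps between the stable images and hence a compatible thread that glues to a global solution. This is essentially the argument of the cellular-automata literature the paper cites (\cite{CCD}, \cite{Toi}), here adapted to bundles with varying fibres; the paper deliberately avoids it and instead gives a functional-analytic proof: using the bipolar theorem it shows (Lemma~\ref{lemma:inclusion}) that injectivity of $A'$ forces the quantitative open-mapping estimate $\overline{AU_K}\supseteq\varepsilon U_{K'}$ --- the finite-dimensionality of the fibres entering through the space $V=\{\varphi : \supp A'\varphi\subseteq K\}$ and the automatic continuity of $(A'|_V)^{-1}$ --- and then removes the closure by an iterative correction scheme exploiting completeness of $\Gamma(X;F)$, as in the open mapping theorem. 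Both proofs hinge on finite-dimensionality (in yours it forces the affine images to stabilize; in the paper's it bounds $\|\varphi\|_d$ on $\{\|A'\varphi\|_d\le 1\}\cap V$), but the payoffs differ: your transpose variant of the subsystem step makes the whole argument purely linear-algebraic and avoids even the bipolar theorem (your density step, which does use it, is redundant once the transpose argument is in place), while the paper's route yields the explicit norm estimate $AU_K\supseteq\varepsilon' U_{K'}$, i.e.\ a quantitative openness statement, and is the self-contained functional-analytic proof the paper set out to provide. One small point of care in your write-up: you should note explicitly that finite hopping range guarantees $A'$ maps $\Gamma_c(X;F')$ into itself (this is exactly your support bound), so that the transpose identification is legitimate; with that remark included, your argument is complete.
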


\begin{remark}
Since the fibers of the bundle $F$ are finite-dimensional vector spaces and $X$ is countable, $\Gamma(X;F)$ is isomorphic to the space of all complex valued sequences. For continuous operators (finite hopping range operators) on this space  it was noted in \cite{Kal} that the previous theorem can be deduced from Eidelheit's theorem. The case when all fibers of $F$ are equal is treated in \cite{Toi} with a Mittag-Leffler argument.
\end{remark}

The second goal of this paper is to apply the previous theorem to graph Laplacians and, more generally, discrete magnetic Schrödinger operators. This is discussed next. 

A {\em weighted graph} is a pair $(X,b)$, where $X$ is a countable set and $b\colon X\times X \to [0,\infty)$ is a map with the properties
\begin{enumerate}
\item[(b0)] $b(x,x) = 0$ for all $x\in X$,
 \item[(b1)] $b(x,y) = b(y,x)$ for all $x,y \in X$, 
 \item[(b2)] ${\rm deg}(x) := \sum_{y \in X}b(x,y) < \infty$ for all $x \in X$.
\end{enumerate}
The elements of $X$ are then interpreted as {\em vertices} of a graph; two such vertices $x,y \in X$ are connected by an {\em edge} if $b(x,y) > 0$, in which case we write $x \sim y$. If, additionally, for every $x \in X$ the set of {\em neighbors of $x$}
$$\{y \in V \mid b(x,y) >0\}$$
is finite, then $b$ is called {\em locally finite}. For $n \in \N$ a {\em path} of {\em length $(n-1)$}  is a finite sequence of vertices $x_1,\ldots, x_n$ such that for all $i = 1,\ldots,n-1$ we have $x_i \sim x_{i+1}$. Two vertices $x,y \in X$ are said to be {\em connected by a path} if they are contained in a path. In this case, {\em the combinatorial distance $d(x,y)$} between $x$ and $y$ is the length of the shortest path containing $x$ and $y$. Being connected by a path is an equivalence relation on $X$ and its equivalence classes are called {\em connected components}. 

We write $C(X)$ for the complex-valued functions on $X$ and $C_c(X)$ for the subspace of functions of finite support. For a given graph $(X,b)$ and a {\em potential} $V\colon X \to \mathbb R$ we define the quadratic form $q_V := q_{b,V}$ by
$$q_V\colon C_c(X) \to \mathbb R, \quad q_V(f) = \frac{1}{2} \sum_{x,y \in X} b(x,y)|f(x) - f(y)|^2 +  \sum_{x \in X} |f(x)|^2 V(x).$$
We say that $q_V$ is {\em nonnegative} if $q_V(f) \geq 0$ for all $f \in C_c(X)$.

We call a vector bundle $F = (F_x)_{x \in X}$ {\em Hermitian} if for all $x \in X$ the space $F_x$ is equipped with a complete inner product $\as{\cdot,\cdot}_x$. In this case, a {\em connection} on $F$ is a family of unitary maps $ \Phi = (\Phi_{xy} \colon F_y \to F_x)_{x,y \in X}$ with the property that $\Phi_{yx} = \Phi_{xy}^{-1}$ for all $x,y \in X$. A {\em self-adjoint bundle endomorphism} on $F$ is  a family of self-adjoint linear maps $W= (W(x)\colon F_x \to F_x)_{x\in X}$ and for $x \in X$ we denote by $\Wm(x)$ the smallest eigenvalue of $W(x)$ and by  $\Ws(x)$ the largest eigenvalue of $W(x)$. 

Given a graph $(X,b)$, a Hermitian vector bundle $F$, a connection $\Phi$  and a bundle endomorphism  $W$, the domain of the associated {\em magnetic Schr\"odinger operator}  $\MM = \MM_{b,\Phi,W}$ on $\Gamma(X;F)$ is 
$$D(\MM) := \{f \in \Gamma(X;F) \mid \sum_{y\in X} b(x,y) \|f(y)\|_y < \infty \text{ for all }x \in X\}, $$
on which it acts by
$$\MM f(x) := \sum_{y \in X} b(x,y) (f(x) - \Phi_{xy} f(y)) + W(x) f(x).$$
If $(X,b)$ is locally finite, then $D(\MM) = \Gamma(X;F)$ and $\MM$ is continuous as it has finite hopping range. 

In the case when $F = (\IC) $ (the trivial line bundle) and $(\Phi_{xy}) = ({\rm Id}_\IC)$, we can identify $\Gamma(X;F)$ with $C(X)$ and $\Gamma_c(X;F')$ with $C_c(X)$ and any self-adjoint bundle endomorphism on $\Gamma(X;F)$ acts as multiplication on $C(X)$ by a potential $V\colon X \to \mathbb R$. In this case, the operator $\MM$ is  the   {\em graph Laplacian} (plus potential $V$), which we denote by $\mathcal L = \mathcal{L}_V$. It acts on its domain
$$D(\mathcal L) = \{f \in C(X) \mid \sum_{y \in X}b(x,y)|f(y)| < \infty \text{ for all } x \in X\} $$
by
$$\mathcal L f(x)  = \sum_{y \in X}b(x,y)(f(x) - f(y)) + V(x)f(x).$$
For magnetic Schrödinger operators on graphs our main theorem reads as follows.

\begin{theorem}\label{thm:graph}
 Let $(X,b)$ be a weighted graph, let $F$ be a Hermitian vector bundle over $X$ and let $W\colon F \to F$ be a self-adjoint bundle endomorphism. Furthermore, assume the following three conditions.
 \begin{enumerate} 
  \item $(X,b)$ is locally finite.
  \item All connected components of $(X,b)$ are infinite. 
  \item  The quadratic form $q_\Wm$ is nonnegative or the quadratic form $q_{-\Ws - 2{\rm deg}}$ is nonnegative. 
 \end{enumerate}
Then the operator $\MM$ is surjective.
\end{theorem}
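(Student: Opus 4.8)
The plan is to deduce surjectivity of $\MM$ from Theorem~\ref{theorem:main}. Since $(X,b)$ is locally finite, $\MM$ has finite hopping range, hence is continuous with $D(\MM)=\Gamma(X;F)$, so it suffices to show that the dual operator $\MM'$ is injective on $\Gamma_c(X;F')$. First I would use the Hermitian structure to identify $\Gamma_c(X;F')$ with $\Gamma_c(X;F)$ through the conjugate-linear Riesz isomorphism $\varphi\leftrightarrow g$, $\varphi_x=\as{\cdot,g_x}_x$. A short computation, using that each $\Phi_{xy}$ is unitary with $\Phi_{yx}=\Phi_{xy}^{-1}$ and that each $W(x)$ is self-adjoint, shows $\MM$ is formally self-adjoint, so under this identification $\MM'$ is given by the same formula as $\MM$ on finitely supported fields. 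As such isomorphisms preserve kernels, injectivity of $\MM'$ reduces to the implication: if $g\in\Gamma_c(X;F)$ satisfies $\MM g=0$, then $g=0$.

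Next I would pass to the scalar field $h\in C_c(X)$, $h(x):=\|g(x)\|_x$, and compare the bundle energy with the scalar forms $q_V$. Since $\MM g=0$ we have $Q(g):=\sum_x\as{\MM g(x),g(x)}_x=0$, and a discrete Green's formula gives $Q(g)=\tfrac12\sum_{x,y}b(x,y)\|g(x)-\Phi_{xy}g(y)\|_x^2+\sum_x\as{W(x)g(x),g(x)}_x$. Using unitarity of $\Phi_{xy}$, the reverse triangle inequality $\|g(x)-\Phi_{xy}g(y)\|_x\ge |h(x)-h(y)|$, and $\as{W(x)g(x),g(x)}_x\ge\Wm(x)h(x)^2$, one gets $Q(g)\ge q_\Wm(h)$; dually, from $\|g(x)-\Phi_{xy}g(y)\|_x^2\le(h(x)+h(y))^2$ and $\as{W(x)g(x),g(x)}_x\le\Ws(x)h(x)^2$ one gets $Q(g)\le -q_{-\Ws - 2{\rm deg}}(h)$. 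Hence, under the first alternative of hypothesis (3), $0=Q(g)\ge q_\Wm(h)\ge 0$, while under the second, $0=Q(g)\le -q_{-\Ws - 2{\rm deg}}(h)\le 0$; in either case $q_V(h)=0$ for a nonnegative form $q_V$ with $h\ge 0$ finitely supported.

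It then remains to prove the scalar statement: if $q_V\ge 0$ on $C_c(X)$ and $h\in C_c(X)$, $h\ge 0$, satisfies $q_V(h)=0$ while all connected components of $(X,b)$ are infinite, then $h=0$. Since $h$ is a global minimizer of the nonnegative form $q_V$, polarization forces the associated bilinear form to vanish against every test function; testing against $\delta_z$ yields $\sum_y b(z,y)(h(z)-h(y))+V(z)h(z)=0$ for all $z$. If $h\neq 0$, then $K:=\supp h$ is finite and nonempty, and since the components are infinite there is a vertex $z\notin K$ adjacent to some $w\in K$. Evaluating at $z$, where $h(z)=0$, gives $-\sum_y b(z,y)h(y)=0$, contradicting $b(z,w)h(w)>0$. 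Thus $h=0$, whence $g=0$, so $\MM'$ is injective and $\MM$ is surjective by Theorem~\ref{theorem:main}.

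The main obstacle is the reduction from the bundle-valued kernel equation to the scalar forms $q_\Wm$ and $q_{-\Ws - 2{\rm deg}}$: it requires the correct Green's formula for the connection Laplacian together with the two complementary norm estimates that convert the magnetic Dirichlet energy into the combinatorial Dirichlet energy of $h$, correctly absorbing both sign conventions in hypothesis (3). The unitarity of the connection is what makes $\|\Phi_{xy}g(y)\|_x=h(y)$ and thereby couples the fibrewise geometry to the scalar estimates. Once this reduction is in place, the scalar boundary-vertex argument and the identification of $\MM'$ are comparatively routine.
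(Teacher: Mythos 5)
Your proof is correct, and while its skeleton matches the paper's (local finiteness gives continuity and finite hopping range; the Hermitian structure identifies $\MM'$ with $\MM|_{\Gamma_c(X;F)}$, so by Theorem~\ref{theorem:main} it suffices to kill finitely supported solutions of $\MM g=0$; then one passes to the scalar function $h=\|g(\cdot)\|$), you deviate in two worthwhile places. First, the paper handles the second alternative in hypothesis (3) by the operator identity $-\MM_{b,\Phi,W}=\MM_{b,-\Phi,-W-2{\rm deg}}$, reducing it to the first case, whereas you prove the complementary estimate $Q(g)\le -q_{-\Ws-2{\rm deg}}(h)$ directly from $\|g(x)-\Phi_{xy}g(y)\|_x\le h(x)+h(y)$ together with $(h(x)+h(y))^2=2h(x)^2+2h(y)^2-(h(x)-h(y))^2$; this is equivalent content and both routes work (your lower bound $Q(g)\ge q_{\Wm}(h)$ via the reverse triangle inequality is likewise just a self-contained proof of the Kato-type domination the paper cites from the literature). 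Second, and more substantially, where the paper's Lemma~\ref{lemma:kernel qv} establishes that $\{h\neq 0\}$ is a union of connected components via the lattice inequality $q_V(\varphi 1_{\{h>0\}})\le q_V(\varphi)$ (an argument imported from \cite{Sch1}), you prove only what is needed and more simply: nonnegativity of $q_V$ plus $q_V(h)=0$ forces, by Cauchy--Schwarz/polarization, that $h$ lies in the kernel of the associated bilinear form, Green's formula turns this into $\LL_V h=0$ pointwise, and evaluating at a vertex $z\notin\supp h$ adjacent to $\supp h$ (which exists because the components are infinite and $\supp h$ is finite) yields $0=\LL_V h(z)=-\sum_y b(z,y)h(y)<0$, a contradiction. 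Note that this boundary-vertex step genuinely uses $h\ge 0$ --- for sign-changing $h$ the terms could cancel, which is precisely why the paper's lemma works harder --- but since $h=\|g(\cdot)\|$ is nonnegative by construction, this costs you nothing here. In sum, your route buys a shorter, fully self-contained proof of the kernel step; the paper's route buys the stronger structural statement about zeros of $q_V$ for arbitrary finitely supported functions.
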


\begin{remark}
\begin{enumerate}[(a)]
\item  For the graph Laplacian without potential this result is contained in \cite{CCD}. Moreover, \cite{Kal} contains the surjectivity of $\LL_V$ (the scalar case) under the condition that $\LL_V$ satisfies a pointwise maximum principle. This maximum principle is equivalent to the fact that at any $x \in X$ we have either $V(x) \geq 0$ or $V(x) + 2 {\rm deg}(x) \leq 0$, see Proposition~\ref{proposition:maximum principle}.  The nonnegativity of $q_V$ is of course satisfied if $V \geq 0$ but this is not necessary. If $q_{0}$ satisfies some Hardy inequality, see e.g. the discussion in \cite{KPP2,KPP1}, or the graph has positive Cheeger constant, see e.g. \cite{BKW}, then also certain $V$ without a fixed sign induce a nonnegative form $q_V$. In this case, we can choose $V$ without fixed sign that also satisfies $V + 2{\rm deg} > 0$.  This shows that for Schrödinger operators our theorem can treat potentials that violate the pointwise maximum principle  of \cite{Kal}. 
 \item The local finiteness condition ensures the continuity of the operator $\MM$ while the other two conditions guarantee that its dual is injective. We shall  see that the previous theorem is optimal in the sense that there are counterexamples when dropping any of the assumptions. However, we also give an example  that shows that neither the nonnegativity of $q_V$ nor of $q_{-V - 2{\rm deg}}$ is necessary for the surjectivity of $\LL_V$, see Section~\ref{section:examples}.
\end{enumerate}

\end{remark}

\section{Proof of Theorem~\ref{theorem:main}}\label{section:background} Let us sketch the proof of Theorem~\ref{theorem:main}. In general it is a consequence of the Hahn-Banach theorem that the surjectivity of a continuous linear operator between locally convex vector spaces implies that its dual operator is injective. Since the dual space of $\Gamma(X;F)$ is given by $\Gamma_c(X;F')$, in our situation this implication can be verified explicitly without using the Hahn-Banach theorem. Hence, it suffices to prove that the injectivity of $A'$ implies the surjectivity of $A$.  For $K \subseteq X$ finite we set 
$$U_K := \{f \in \Gamma(X;F) \mid p_K(f) \leq 1\}.$$
We will show that if $A'$ is injective, then for every finite $\emptyset \neq K \subseteq X$ there exists $\varepsilon > 0$ and finite $ \emptyset \neq K'\subseteq X$  such that 
 $$\overline{AU_K} \supseteq \varepsilon U_{K'}.$$
With a standard argument we remove the closure in the above inclusion such that for some $0< \varepsilon' < \varepsilon$ we have $AU_K \supseteq \varepsilon' U_{K'}$ . From this we deduce surjectivity via 
$$A\Gamma(X;F) = \bigcup_{n \in \N} n AU_K  \supseteq \bigcup_{n \in \N} n\varepsilon' U_{K'} = \Gamma(X;F).$$

Before giving the details we recall some elementary facts about polar sets. First we note that an isomorphism between the continuous dual of $\Gamma(X;F)$ and $\Gamma_c(X;F')$ is given as follows. We denote by $(\cdot,\cdot)_x\colon F_x' \times F_x \to \mathbb C$ the dual pairing between $F_x'$ and $F_x$, that is $(\varphi,f)_x := \varphi(f)$ for $\varphi \in F'_x$ and $f \in F_x$. Then the map
$$\Gamma_c(X;F') \to \Gamma(X;F)',\quad \varphi \mapsto (\varphi,\cdot)$$
with
$$(\varphi,f) = \sum_{x \in X} (\varphi(x),f(x))_x, \quad f \in \Gamma(X;F),$$
is a vector space isomorphism and we tacitly identify $\Gamma_c(X;F')$ with $\Gamma(X;F)'$ via this map.
%
%
%
%
%

Recall that the polar sets of $M \subseteq \Gamma(X;F)$ and of $N \subseteq \Gamma(X;F)' = \Gamma_c(X;F')$ are defined by 
$$M^\circ = \{\varphi \in \Gamma_c(X;F') \mid |(\varphi,f)| \leq 1 \text{ for all } f \in M \}$$
and 
$$N^\circ = \{f \in \Gamma(X;F) \mid |(\varphi,f)| \leq 1 \text{ for all } \varphi \in N \}.$$
The bipolar theorem  states that for any convex $M \subseteq \Gamma(X;F)$ we have
$$\overline{M} = (M^{\circ})^{\circ},$$
where $\overline{M}$ denotes the closure of $M$ in $\Gamma(X;F)$. We can now prove the main lemma for the surjectivity of $A$.
\begin{lemma}\label{lemma:inclusion}
 Let $A'$ be injective. For every finite $\emptyset \neq K \subseteq X$ there exists $\varepsilon > 0$ and $\emptyset \neq K'\subseteq X$ finite such that 
 $$\overline{A(U_K)} \supseteq \varepsilon U_{K'}.$$
\end{lemma}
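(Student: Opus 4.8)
The plan is to use the bipolar theorem to translate the desired inclusion $\overline{A(U_K)} \supseteq \varepsilon U_{K'}$ into a statement about polar sets, and then to exploit the injectivity of $A'$ together with a compactness/finite-dimensionality argument to produce the constants $\varepsilon$ and $K'$. Since $A(U_K)$ is convex, the bipolar theorem gives $\overline{A(U_K)} = ((A(U_K))^\circ)^\circ$. Hence it suffices to find $\varepsilon > 0$ and finite $K'$ with $(\varepsilon U_{K'})^\circ \supseteq (A(U_K))^\circ$, because then $\overline{A(U_K)} = ((A(U_K))^\circ)^\circ \supseteq ((\varepsilon U_{K'})^\circ)^\circ = \varepsilon U_{K'}$ (the set $\varepsilon U_{K'}$ being closed and convex). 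So I would first compute the two polars explicitly. The key observation is that the polar $(A(U_K))^\circ$ equals $\{\varphi \in \Gamma_c(X;F') \mid |(\varphi, Af)| \le 1 \text{ for all } f \in U_K\} = \{\varphi \mid |(A'\varphi, f)| \le 1 \text{ for all } f \in U_K\} = (A')^{-1}(U_K^\circ)$, using the defining relation $(\varphi, Af) = (A'\varphi, f)$. Thus $(A(U_K))^\circ = (A')^{-1}\bigl(U_K^\circ\bigr)$.

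Next I would identify $U_K^\circ$. Since $U_K = \{f : \sum_{x \in K}\|f(x)\|_x \le 1\}$ constrains only the finitely many fibers indexed by $K$ and leaves the others free, a functional $\varphi \in \Gamma_c(X;F')$ pairs boundedly with all of $U_K$ exactly when $\varphi$ is supported on $K$ and its norm there is controlled. Concretely I expect $U_K^\circ = \{\varphi \in \Gamma_c(X;F') \mid \supp \varphi \subseteq K \text{ and } \max_{x \in K}\|\varphi(x)\|_x' \le 1\}$, where $\|\cdot\|_x'$ is the dual norm on $F_x'$; this is the standard polar of a scaled $\ell^1$-type ball. In particular $U_K^\circ$ is a \emph{bounded} subset of the finite-dimensional space $\bigoplus_{x \in K} F_x'$, hence relatively compact. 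This finite-dimensionality, forced by the finite-support structure of the dual bundle, is what makes the whole argument work and is the crucial structural input.

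The heart of the proof is then to show that $(A')^{-1}(U_K^\circ)$ is itself contained in $\varepsilon^{-1} U_{K'}^\circ$ for suitable $\varepsilon$ and finite $K'$, which is equivalent to the inclusion of polars displayed above. Here injectivity of $A'$ enters. Because $A'$ has finite hopping range, it maps into finitely supported fields, and when restricted to fields supported on $K$ it lands in fields supported on some fixed finite set $K'$ (the range of the hopping of $A'$ over $K$). I would take $K'$ to be exactly this finite set. The difficulty is to produce a uniform constant: I need that whenever $A'\varphi$ lies in the bounded set $U_K^\circ$, the field $\varphi$ itself is bounded (say $\max_{x}\|\varphi(x)\|_x' \le \varepsilon^{-1}$). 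This is precisely where injectivity is used: the map $A'$ restricted to fields supported on $K'$ is an injective linear map between finite-dimensional spaces, hence bounded below, so a bound on $A'\varphi$ yields a bound on $\varphi$. The main obstacle, and the step requiring care, is the bookkeeping between the two finite sets: I must choose $K'$ large enough that $A'$ maps $\Gamma_c(K';F')$ into $\Gamma_c(K;F')$ and so that every $\varphi$ with $A'\varphi \in U_K^\circ$ is automatically supported in $K'$, and then invoke the lower bound coming from injectivity on this finite-dimensional restriction to extract $\varepsilon$.

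Once $(A(U_K))^\circ = (A')^{-1}(U_K^\circ) \subseteq \varepsilon^{-1} U_{K'}^\circ = (\varepsilon U_{K'})^\circ$ is established, taking polars once more (which reverses inclusions) and applying the bipolar theorem to the closed convex set $\varepsilon U_{K'}$ yields $\overline{A(U_K)} \supseteq \varepsilon U_{K'}$, completing the proof. In summary, the strategy reduces everything to a finite-dimensional linear-algebra fact — an injective linear map between finite-dimensional spaces is bounded below — dressed up through the bipolar theorem and the explicit description of polars of the seminorm balls $U_K$.
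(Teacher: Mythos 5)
Your overall architecture coincides with the paper's: apply the bipolar theorem to the convex set $A(U_K)$, compute $(A(U_K))^\circ = (A')^{-1}(U_K^\circ)$ with the explicit description $U_K^\circ = \{\varphi \in \Gamma_c(X;F') \mid \supp\varphi \subseteq K,\ \max_{x\in K}\|\varphi(x)\|_{F_x'} \le 1\}$, and extract $\varepsilon$ from a bounded-below estimate for an injective linear map on a finite-dimensional space. However, there is a genuine gap at the one step you yourself flag as the ``bookkeeping'' step: the existence of the finite set $K'$. You propose to obtain $K'$ from the finite hopping range of $A'$ (``the range of the hopping of $A'$ over $K$''), and you even ask to choose $K'$ so that $A'$ maps $\Gamma_c(K';F')$ into $\Gamma_c(K;F')$. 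This is backwards. Hopping-range data controls the support of the \emph{image} $A'\varphi$ in terms of the support of $\varphi$, whereas what you need is control of the support of the \emph{preimage}: every $\varphi$ with $\supp A'\varphi \subseteq K$ must be supported in one fixed finite $K'$. No hopping-range argument can deliver this, since a field $\varphi$ concentrated arbitrarily far from $K$ may a priori still have $A'\varphi$ supported in $K$; indeed, if $A' $ fails to be injective (take $A=0$) no finite $K'$ exists at all, so any support argument that does not invoke injectivity must fail. Moreover, the requirement $A'(\Gamma_c(K';F')) \subseteq \Gamma_c(K;F')$ is in general impossible to arrange for large $K'$, since enlarging $K'$ enlarges the image supports rather than confining them to $K$.

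The correct mechanism, which is the paper's, is that injectivity enters \emph{twice}, and the first occurrence is precisely what you are missing: set $V := \{\varphi \in \Gamma_c(X;F') \mid \supp A'\varphi \subseteq K\}$. Since $A'$ is injective, $A'|_V$ embeds $V$ into the space of fields supported in $K$, which has dimension $\sum_{x\in K}\dim F_x' < \infty$; hence $V$ is finite-dimensional, and taking a basis $(\varphi_i)_{i\in I}$ of $V$ and $K' := \bigcup_{i\in I}\supp\varphi_i$ gives a finite set with $\supp\varphi \subseteq K'$ for all $\varphi \in V$. Once this is in place, your second use of injectivity — continuity of $(A'|_V)^{-1}$ on the finite-dimensional space $A'(V)$ with respect to $\|\varphi\|_d = \max_x\|\varphi(x)\|_{F_x'}$, yielding a constant $C$ with $\|\varphi\|_d \le C$ whenever $\|A'\varphi\|_d \le 1$ — goes through exactly as you describe, and $\varepsilon = C^{-1}$ completes the proof as in the paper. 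So the repair is small but essential: the finite-dimensionality of $V$ forced by injectivity, not finite hopping range, is what produces $K'$.
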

\begin{proof}
 The bipolar theorem implies
\begin{align*}
 \overline{AU_K} &= \{f \in \Gamma(X;F) \mid |(\varphi,f)| \leq 1 \text{ for all } \varphi \in A(U_K)^\circ\}\\
 &= \{f \in \Gamma(X;F) \mid |(\varphi,f)| \leq 1 \text{ for all } \varphi \text{ s.t. } |(A'\varphi,g)|\leq 1 \text{ for all }g \in U_K\}.
\end{align*}
By the definition of the dual pairing between $\Gamma_c(X;F')$ and $\Gamma(X;F)$ we have
$$|(A'\varphi,g)|\leq 1 \text{  for all }g \in U_K$$
if and only if
$${\rm supp}\, A' \varphi \subseteq K \text{ and } \max_{x \in K} \|A'\varphi(x)\|_{F_x'} \leq 1.$$
Here, $\|\psi\|_{F_x'}$ denotes the operator norm of the functional $\psi \in F_x'$.

 We consider the vector space 
 $$V:= \{\varphi \in \Gamma_c(X;F') \mid {\rm supp} \, A'\varphi \subseteq K\}.$$
 Since $A'$ is injective, $V$ is finite-dimensional and $A'|_V\colon V \to A'(V)$ is a vector space isomorphism. In particular, there exists a finite $K'\subseteq X$ such that $\supp \varphi \subseteq K'$ for all $\varphi \in V$ (e.g. choose a finite basis $(\varphi _i)_{i \in I}$ of $V$ and set $K' = \cup_i {\rm supp}\, \varphi_i$). 
 
 We equip the vector space $\Gamma_c(X;F')$ (and all of its subspaces) with the norm $\|\cdot\|_d$ defined by 
 $$\|\varphi \|_d := \max_{x \in X} \|\varphi(x)\|_{F_x'}.$$
 Since linear operators on finite dimensional normed spaces are always continuous, the inverse  $(A'|_V)^{-1}:A'(V) \to V$ is continuous with respect to this norm. Hence, there is some constant $C > 0$ such that the $\|\cdot\|_d$ norm of every element in the set
 $$\{\varphi \in \Gamma_c(X;F') \mid {\rm supp} \, A'\varphi \subseteq K \text{ and } \|\varphi\|_d \leq 1\} = \{\varphi \in V \mid   \|A'\varphi \|_d\leq 1\}$$
 is bounded  by $C$. 
 
 The discussion at the beginning of the proof shows
 $$\overline{AU_K} =  \{f \in \Gamma(X;F) \mid |(\varphi,f)| \leq 1 \text{ for all } \varphi \in V \text{ with } \|A'\varphi\|_d \leq 1\}.$$
 Since $\|\varphi\|_d \leq C$ and ${\rm supp}\, \varphi \subseteq K'$ whenever $\varphi \in V$ and  $\|A'\varphi\|_d \leq 1$, we obtain that $f \in \Gamma(X;F)$ with $p_{K'}(f) \leq C^{-1}$ satisfies $f \in \overline{AU_K}$. This finishes the proof.
\end{proof}
\begin{remark}
 This lemma is the only place in the proof of Theorem~\ref{theorem:main} where we used the concrete structure of the space $\Gamma(X;F)$. It is the main step in the proof of Theorem~\ref{theorem:main}.
\end{remark}

As already mentioned above, we can remove the closure in the previous lemma. This is based on a standard argument for mappings between complete metrizable topological vector spaces. We include a proof for the  convenience of the reader, see also \cite[Lemma~3.9]{MV}. 
\begin{lemma}
  Let $A'$ be injective. For every finite $\emptyset \neq K \subseteq X$ there exists $\varepsilon > 0$ and $\emptyset \neq K'\subseteq X$ finite such that 
 $$ A(U_K) \supseteq \varepsilon U_{K'}.$$
\end{lemma}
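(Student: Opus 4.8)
The plan is to upgrade the inclusion $\overline{A(U_K)} \supseteq \varepsilon U_{K'}$ from Lemma~\ref{lemma:inclusion} to the closure-free inclusion $A(U_K) \supseteq \varepsilon' U_{K'}$ by a standard iteration (a Baire-type or ``successive approximations'' argument) that exploits the completeness and metrizability of $\Gamma(X;F)$. The essential point is that $\Gamma(X;F)$, being a countable product of finite-dimensional (hence complete metrizable) spaces, is itself a complete metrizable topological vector space, i.e.\ a Fr\'echet space. So I would first record that $\Gamma(X;F)$ carries a complete translation-invariant metric compatible with its topology; concretely one may take $d(f,g) = \sum_{n} 2^{-n} \frac{p_{K_n}(f-g)}{1+p_{K_n}(f-g)}$ for an exhaustion $(K_n)$ of $X$ by finite sets.

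The core of the argument is the iteration. Fix the finite set $K$ and apply Lemma~\ref{lemma:inclusion} to obtain $\varepsilon>0$ and finite $K'$ with $\overline{A(U_K)} \supseteq \varepsilon U_{K'}$. By scaling (replacing $K$ by dilates and using linearity of $A$ together with $A(\lambda U_K) = \lambda A(U_K)$), for each $n$ we also have $\overline{A(2^{-n}U_K)} \supseteq 2^{-n}\varepsilon U_{K'}$. Now given a target $y \in \tfrac{\varepsilon}{2} U_{K'}$, I would construct inductively a sequence $x_n \in 2^{-n}U_K$ so that $y - A(x_0 + \dots + x_n)$ lies in an ever-smaller polar neighborhood, say $2^{-(n+1)}\varepsilon U_{K'}$: at each stage the residual lies in a set of the form $2^{-(n+1)}\varepsilon U_{K'} \subseteq \overline{A(2^{-(n+1)}U_K)}$, so one can pick $x_{n+1} \in 2^{-(n+1)}U_K$ with the image of the partial sum approximating $y$ to within the next tolerance. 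The partial sums $\sum x_n$ form a Cauchy sequence because $\sum_n 2^{-n} < \infty$ controls the $p_K$-seminorm; by completeness they converge to some $x \in \Gamma(X;F)$ with $x \in U_K$ (after absorbing the geometric factor, e.g.\ $x \in 2U_K$, which one rescales away into $\varepsilon'$), and continuity of $A$ gives $Ax = y$. Hence $A(U_K) \supseteq \varepsilon' U_{K'}$ with $\varepsilon' = \varepsilon/2$ (up to the harmless rescaling of $K$).

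The step I expect to be the main obstacle, or at least the one demanding the most care, is making the ``approximate then correct'' induction clean in the seminorm setting rather than a single-norm Banach setting. Specifically, the neighborhoods $U_{K'}$ are defined by a \emph{single} seminorm $p_{K'}$, so I must verify that the residual $y - A(\text{partial sum})$ genuinely lands back inside a scaled $U_{K'}$ (and not merely inside some larger neighborhood indexed by a different finite set), which is exactly what the uniform choice of $K'$ in Lemma~\ref{lemma:inclusion} guarantees. I would also need to confirm that convergence of the correction series in the $p_K$-seminorm yields a genuine limit in the product topology and that membership in $U_K$ is preserved in the limit; since $p_K$ is a single seminorm and $U_K$ is $p_K$-closed and convex, the limit indeed satisfies $p_K(x) \le \sum_n 2^{-n}$, which is finite and controllable. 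Once these bookkeeping points are pinned down, the completeness of $\Gamma(X;F)$ does all the real work, and the surjectivity of $A$ then follows immediately from $A\,\Gamma(X;F) = \bigcup_n nA(U_K) \supseteq \bigcup_n n\varepsilon' U_{K'} = \Gamma(X;F)$ as already indicated in the sketch preceding Lemma~\ref{lemma:inclusion}.
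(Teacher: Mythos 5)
Your overall strategy---successive approximation plus completeness, in the style of the open mapping theorem---is the same as the paper's, but your implementation with a \emph{single fixed pair} $(K,K')$ has two genuine gaps, both caused by the fact that $p_K$ and $p_{K'}$ are seminorms that only see values on finite sets, while the topology of $\Gamma(X;F)$ is the full product topology. First, convergence: your corrections satisfy only $p_K(x_n)\le 2^{-n}$, which says nothing about the values of $x_n$ outside $K$, so the partial sums $s_n=x_0+\dots+x_n$ need not be Cauchy at any point $x\notin K$ and the series need not converge in $\Gamma(X;F)$ at all. The metric $d$ you write down involves \emph{all} the seminorms $p_{K_n}$, so completeness can only ``do the real work'' if the iteration controls every seminorm, which yours does not; your appeal to $U_K$ being $p_K$-closed and convex does not help, since $p_K$ has the large kernel $\{f\in\Gamma(X;F) \mid f|_K=0\}$ and $p_K$-Cauchy sequences have no well-defined limit. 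Second, exactness: even granting that $s_n\to x$, all your residual bounds are in the single seminorm $p_{K'}$, so the limit yields only $p_{K'}(y-Ax)=0$, i.e.\ $Ax=y$ \emph{on $K'$}. The lemma demands $Ax=y$ on all of $X$, and the elements of $\varepsilon' U_{K'}$ are arbitrary off $K'$, so this is strictly weaker than what must be proved.

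The paper's proof repairs exactly these two defects, and this is the essential difference from your scheme: it fixes exhaustions $K=K_1\subseteq K_2\subseteq\cdots$ and $K'=K_1'\subseteq K_2'\subseteq\cdots$ with $\bigcup_n K_n=\bigcup_n K_n'=X$, applies Lemma~\ref{lemma:inclusion} at \emph{every} scale to obtain constants $L_n\ge 1$ with $\overline{A(L_nU_{K_n})}\supseteq U_{K_n'}$, and then arranges the induction so that $p_{K_n}(g_n)$ is geometrically small (hence $(s_n(x))$ is Cauchy for every $x\in X$, because every $x$ eventually lies in $K_n$) and so that the residual $f-As_n$ is small in the \emph{growing} seminorm $p_{K_{n+1}'}$ (hence $Ag=f$ at every point in the limit). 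The degree of freedom that makes this possible---and that your fixed-$K'$ iteration fails to exploit---is that the closure in Lemma~\ref{lemma:inclusion} is taken in the product topology, so at each step the approximation of the residual may be demanded to accuracy $\delta$ in an arbitrarily large seminorm $p_{K_{n+1}'}$, not merely in $p_{K'}$. Without introducing these exhaustions and the stage-dependent constants $L_n$, your argument as written does not establish the lemma.
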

\begin{proof}
 Let $\emptyset \neq K \subseteq X$ finite and let $\varepsilon' > 0$ and $\emptyset \neq K' \subseteq X$  finite  such that
 $$\overline{A(U_K)} \supseteq   \varepsilon' U_{K'}.$$
 We choose an increasing sequence of finite subsets $(K_n)$ of $X$ with $K_1 = K$ and $\cup_n K_n = X$. Lemma \ref{lemma:inclusion} yields that there exists an increasing sequence of finite sets $(K'_n)$ with $K'_1 = K'$, $\cup_n K_n' = X$ and $L_n \geq 1$ such that 
 $$\overline{A(L_n U_{K_n})} \supseteq U_{K_{n}'}.$$
 Rescaling this inclusion shows that  for every $n \in \N$, every $\gamma,\delta > 0$  and every $h \in \Gamma(X;F)$ there exists $g \in \Gamma(X;F)$ with 
 \begin{align}\label{equation:choice}
  p_{K_n}(g) \leq L_n(p_{K'_n}(h)+\gamma)\quad \text{ and }\quad p_{K'_{n+1}}(Ag - h) \leq \delta. \tag{$\heartsuit$}
 \end{align}
 Note that for $h$ with $p_{K'_n}(h) \neq 0$ the $\gamma$ in the first inequality can be omitted. Let $f \in 1/2U_{K'}$. We construct $g \in 2L_1 U_K$ with $Ag = f$, from which the claim follows with $\varepsilon = 1/4L_1$.
 
 By applying \eqref{equation:choice} to $n =1$, $h = f \in 1/2U_{K'}, \gamma = 1/2$ and $\delta = L_1/(4L_2)$ we choose $g_1$ with $p_{K_1}(g_1) \leq L_1$ and $p_{K'_2}(Ag_1 - f) \leq \frac{L_1}{4L_2}$ and  construct $(g_n)$ inductively as follows. Suppose that we have chosen $g_1,\ldots,g_n$ with
 $$p_{K_n}(g_n) \leq \frac{L_1}{2^{n-1}}\quad \text{ and } \quad p_{K'_{n+1}}(f - A(g_1 + \ldots + g_n)) \leq \frac{L_1}{2^{n+1} L_{n+1}}. $$
 By applying \eqref{equation:choice} to $n+1$,  $h = f - A(g_1 + \ldots + g_n)$, $\gamma = L_1/(2^{n+1}L_{n+1}), \delta = L_1/(2^{n+2} L_{n+2})$ we  choose $g_{n+1}$ with 
 $$p_{K_{n+1}}(g_{n+1}) \leq \frac{L_1}{2^{n}}\quad \text{ and } \quad p_{K'_{n+2}}(f - A(g_1 + \ldots + g_n) - A g_{n+1}) \leq \frac{L_1}{2^{n+2} L_{n+2}}.$$
We let $s_n := g_1 + \ldots + g_n$.  Since $(K_{n})$ is increasing and covers $X$, it follows from this construction that for every $x \in X$ the sequence $(s_n(x))$ is Cauchy in $F_x$. Thus, $(s_n)$ converges in $\Gamma(X;F)$ to some $g$. We obtain
$$p_K(g)=  \lim_{n \to \infty} p_K(s_n) \leq \liminf_{n\to \infty} \sum_{l = 1}^n p_{K_l}(g_l) \leq 2L_1. $$
Moreover, since $(K'_{n+1})$ is increasing and covers $X$, for $x \in X$ we conclude
$$\|f(x) - Ag(x) \|_x = \lim_{n \to \infty} \|f(x) - As_n(x) \|_x \leq \liminf_{n \to \infty} p_{K_{n+1}'}(f - As_n) \leq  \liminf_{n \to \infty}  \frac{L_1}{2^{n+1}} = 0.$$
Note that for the last inequality we used $L_{n+1} \geq 1$. This finishes the proof.
\end{proof}

\begin{proof}[Proof of Theorem \ref{theorem:main}]
 With the help of the previous lemma, the proof can be given exactly as sketched at the beginning of this section.
\end{proof}

\begin{remark}
 As already discussed in the introduction  there are several ways to prove Theorem~\ref{theorem:main}. Yet another functional analytic proof that is not based on Eidelheit's theorem, but uses further nontrivial results about the Fr\'echet space $\Gamma(X;F)$, was communicated to us by Jürgen Voigt. Using the structure of $\Gamma(X;F)$ and its dual it is possible to deduce from the closed range theorem \cite[Theorem~9.6.3]{Jar}   that all continuous operators on $\Gamma(X;F)$ have closed range. Since injectivity of $A'$ yields that the range of $A$ is dense, we also obtain that $A$ is surjective.
\end{remark}

\section{An application to magnetic Schrödinger operators}\label{section:applications}
In this section we apply the general surjectivity criterion to magnetic Schrödinger operators. We prove Theorem~\ref{thm:graph} and we discuss how the maximum principle from \cite{Kal} is related to our condition on the nonnegativity of $q_{V}$ in the scalar case. The strategy for the proof of Theorem~\ref{thm:graph} is as follows. Using Theorem~\ref{theorem:main} and the structure of magnetic Schrödinger operators we show that it suffices to verify that $\MM|_{\Gamma_c(X;F)}$ is injective. We establish this injectivity for the graph Laplacian $\mathcal{L}_\Wm$  respectively $\LL_{-\Wm - 2{\rm deg}}$ and then extend it to $\MM$ by domination.

 Let $F$ be a Hermitian vector bundle over $X$. We write $|\cdot|_x$ for the norm on $F_x$ which is induced by the scalar product $\as{\cdot,\cdot}_x$ (which is assumed to be linear in the second variable). In this case we identify $\Gamma(X;F)'$ and $\Gamma_c(X;F)$. The dual pairing between $\Gamma(X;F)$ and $\Gamma_c(X;F)$ is then given by
 $$(\varphi,f) = \sum_{x \in X} \as{\varphi(x),f(x)}_x,\quad f \in \Gamma(X;F), \varphi \in \Gamma_c(X;F).$$
  For $f \in \Gamma(X;F)$ we denote by $|f|$ the  function $|f|\colon X \to \R, |f|(x) = |f(x)|_x$. In what follows $b$ is a graph over $X$, $\Phi$ is a unitary connection on $F$ and $W$ is a self-adjoint bundle endomorphism and we let $\MM = \MM_{b,\Phi,W}$ the associated magnetic Schrödinger operator.
%

\begin{lemma}[Domination]\label{lemma:domination}
 For every $\varphi \in \Gamma_c(X;F)$ we have
 $$(\varphi,\mathcal M \varphi) \geq q_{\Wm}(|\varphi|).$$
\end{lemma}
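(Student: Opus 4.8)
The plan is to expand the pairing $(\varphi,\MM\varphi)$ into a kinetic term coming from the connection part of $\MM$ and a potential term coming from $W$, and then to bound each piece below by the corresponding piece of $q_{\Wm}(|\varphi|)$. Since $\varphi \in \Gamma_c(X;F)$ is finitely supported and ${\rm deg}(x) < \infty$ for every $x$, each double sum below ranges effectively over $\supp\varphi$ (times finitely many neighbors) and is absolutely convergent, so all rearrangements are justified without further comment.

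First I would split
$$(\varphi,\MM\varphi) = \sum_x \as{\varphi(x),\sum_y b(x,y)(\varphi(x)-\Phi_{xy}\varphi(y))}_x + \sum_x \as{\varphi(x),W(x)\varphi(x)}_x.$$
The second sum is the easy part: since $W(x)$ is self-adjoint with smallest eigenvalue $\Wm(x)$, the spectral bound $\as{\xi,W(x)\xi}_x \ge \Wm(x)|\xi|_x^2$ gives $\sum_x \as{\varphi(x),W(x)\varphi(x)}_x \ge \sum_x \Wm(x)|\varphi(x)|_x^2$, which is precisely the potential part of $q_{\Wm}(|\varphi|)$.

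The key step is to rewrite the kinetic term $K := \sum_{x,y} b(x,y)\big(|\varphi(x)|_x^2 - \as{\varphi(x),\Phi_{xy}\varphi(y)}_x\big)$ in symmetric edge form. Using $b(x,y)=b(y,x)$ and relabelling, the diagonal part symmetrizes to $\tfrac12\sum_{x,y} b(x,y)\big(|\varphi(x)|_x^2 + |\varphi(y)|_y^2\big)$. For the off-diagonal part I would swap $x \leftrightarrow y$ and invoke the defining relation $\Phi_{yx} = \Phi_{xy}^{-1} = \Phi_{xy}^{*}$ of the connection to see that $\sum_{x,y} b(x,y)\as{\varphi(x),\Phi_{xy}\varphi(y)}_x$ equals its own complex conjugate, hence is real and equals $\sum_{x,y} b(x,y)\,\mathrm{Re}\as{\varphi(x),\Phi_{xy}\varphi(y)}_x$. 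Combining the two pieces and using that $\Phi_{xy}$ is unitary, so that $|\Phi_{xy}\varphi(y)|_x = |\varphi(y)|_y$, I recognize the summand as an expanded square and obtain
$$K = \tfrac12\sum_{x,y} b(x,y)\,|\varphi(x)-\Phi_{xy}\varphi(y)|_x^2.$$

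Finally I would apply the reverse triangle inequality together with unitarity: from $|\Phi_{xy}\varphi(y)|_x = |\varphi(y)|_y$ we get $|\varphi(x)-\Phi_{xy}\varphi(y)|_x \ge \big||\varphi(x)|_x - |\varphi(y)|_y\big| = \big||\varphi|(x)-|\varphi|(y)\big|$, whence $K \ge \tfrac12\sum_{x,y} b(x,y)\,\big||\varphi|(x)-|\varphi|(y)\big|^2$, the kinetic part of $q_{\Wm}(|\varphi|)$. Adding this to the potential bound yields the asserted inequality. The only genuinely nontrivial point is the symmetrization of the cross term, where the unitarity of $\Phi_{xy}$ and the relation $\Phi_{yx} = \Phi_{xy}^{*}$ must be used carefully, both to reduce the sesquilinear expression to its real part and to assemble the squared difference $|\varphi(x)-\Phi_{xy}\varphi(y)|_x^2$; everything else is routine bookkeeping.
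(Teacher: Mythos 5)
Your proof is correct, but it takes a genuinely different route from the paper, whose entire proof consists of two citations: a discrete Kato inequality $(\varphi,\MM\varphi) \geq (|\varphi|,\LL_{\Wm}|\varphi|)$ and Green's formula $(|\varphi|,\LL_{\Wm}|\varphi|) = q_{\Wm}(|\varphi|)$, both quoted from an external reference. What you have done is unpack both cited results into one self-contained computation: your symmetrization of the cross term (via $b(x,y)=b(y,x)$ and $\Phi_{yx}=\Phi_{xy}^{*}$), which recasts the kinetic part as $\tfrac12\sum_{x,y} b(x,y)\,|\varphi(x)-\Phi_{xy}\varphi(y)|_x^2$, is exactly the Green's-formula step; your pointwise reverse triangle inequality combined with unitarity, $|\varphi(x)-\Phi_{xy}\varphi(y)|_x \geq \bigl||\varphi|(x)-|\varphi|(y)\bigr|$, is precisely the content of the discrete Kato inequality; and the spectral bound $\as{\xi,W(x)\xi}_x \geq \Wm(x)|\xi|_x^2$ handles the potential term. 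Your version buys self-containedness and makes visible where the unitarity of the connection is actually used (twice: to make the cross term real and to identify $|\Phi_{xy}\varphi(y)|_x$ with $|\varphi|(y)$), at the cost of bookkeeping the paper outsources; it also implicitly establishes that $(\varphi,\MM\varphi)$ is real, which the stated inequality presupposes and the paper leaves to the citations.

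One small caveat: the lemma does not assume local finiteness, so your parenthetical justification that the sums range over ``$\supp\varphi$ times finitely many neighbors'' is not quite accurate --- the inner sums over $y$ may be genuinely infinite. What actually guarantees absolute convergence is the other half of your own sentence, namely ${\rm deg}(x) = \sum_y b(x,y) < \infty$ (condition (b2)) together with the finiteness of $\supp\varphi$, so all your rearrangements do stand; just drop the appeal to finitely many neighbors.
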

\begin{proof}
 The statement follows from of a discrete Version of Kato's inequality
 $$(\varphi,\mathcal M \varphi) \geq (|\varphi|,\mathcal L_{\Wm} |\varphi|),$$
 see e.g. \cite[Lemma~2.2]{Sch2}, combined with Green's formula
 $$(|\varphi|,\mathcal L_{\Wm} |\varphi|) = q_\Wm(|\varphi|),$$
 see e.g. \cite[Lemma~2.1]{Sch2}. This finishes the proof.
\end{proof}

\begin{lemma}[Dual operator]\label{lemma:dual operator} Let $(X,b)$ be locally finite. Then $\mathcal M$ is continuous and the dual operator of $\mathcal M$ is given by $\mathcal M|_{\Gamma_c(X;F)}$. In particular, $\mathcal M$ is surjective if and only if $\mathcal M|_{\Gamma_c(X;F)}$ is injective.
\end{lemma}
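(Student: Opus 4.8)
The plan is to prove Lemma~\ref{lemma:dual operator} in three moves: establish continuity, identify the dual operator explicitly, and then invoke Theorem~\ref{theorem:main} to read off the surjectivity criterion. The continuity is essentially given: the paper already observed in Section~\ref{section:main results} that for a locally finite graph one has $D(\MM) = \Gamma(X;F)$ and that $\MM$ has finite hopping range, which by the characterization in Section~\ref{section:main results} is equivalent to continuity. So the only thing to note is that for fixed $x$, the value $\MM f(x)$ depends only on $f(x)$ and on the finitely many $f(y)$ with $y\sim x$, hence on $f|_K$ for the finite set $K = \{x\}\cup\{y : b(x,y)>0\}$.

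The substantive step is computing the dual operator. Here I would use the identification $\Gamma(X;F)' \cong \Gamma_c(X;F)$ via the Hermitian pairing $(\varphi,f) = \sum_{x} \as{\varphi(x),f(x)}_x$ set up just before the lemma. For $\varphi \in \Gamma_c(X;F)$ and $f \in \Gamma(X;F)$ I would write out $(\varphi, \MM f) = \sum_x \as{\varphi(x), \MM f(x)}_x$, substitute the definition of $\MM f(x)$, and rearrange the double sum to isolate the coefficient of each $f(z)$. The finite support of $\varphi$ together with local finiteness makes all sums finite, so the interchange of summation is unproblematic. The computation splits into the three terms of $\MM$: the diagonal part $\sum_x b(x,y)f(x)$, the hopping part $-\sum_x b(x,y)\Phi_{xy}f(y)$, and the potential $W(x)f(x)$. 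For the hopping term the key input is that $\Phi$ is a unitary connection with $\Phi_{yx} = \Phi_{xy}^{-1} = \Phi_{xy}^*$, so that $\as{\varphi(x), \Phi_{xy} f(y)}_x = \as{\Phi_{xy}^*\varphi(x), f(y)}_y = \as{\Phi_{yx}\varphi(x), f(y)}_y$; relabeling indices and using the symmetry $b(x,y)=b(y,x)$ turns this into the hopping term of $\MM$ applied to $\varphi$. The diagonal and potential terms are symmetric because $b$ is symmetric and each $W(x)$ is self-adjoint. The upshot is $(\varphi,\MM f) = (\MM\varphi, f)$, i.e. the dual operator $\MM'$ agrees with $\MM|_{\Gamma_c(X;F)}$.

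With that identity in hand, the final sentence is immediate: Theorem~\ref{theorem:main} says $\MM$ is surjective if and only if $\MM' = \MM|_{\Gamma_c(X;F)}$ is injective, which is exactly the asserted equivalence.

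The main obstacle I anticipate is purely bookkeeping: correctly tracking the index relabeling in the hopping term and making sure the adjoint $\Phi_{xy}^* = \Phi_{yx}$ lands the term on the right fiber $F_y$ so that the pairing $\as{\cdot,\cdot}_y$ is the one that appears. There is no analytic difficulty, since local finiteness and the finite support of $\varphi$ reduce everything to finite sums; the content is the self-adjointness of $\MM$ under the chosen pairing, which hinges on the three structural hypotheses $b(x,y)=b(y,x)$, $\Phi_{yx}=\Phi_{xy}^{-1}$ with $\Phi$ unitary, and $W(x)=W(x)^*$.
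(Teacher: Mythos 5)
Your proposal is correct and takes essentially the same route as the paper: continuity from finite hopping range, the symmetry identity $(\varphi,\MM f)=(\MM\varphi,f)$ under the Hermitian identification of $\Gamma(X;F)'$ with $\Gamma_c(X;F)$, and then Theorem~\ref{theorem:main} for the surjectivity equivalence. The only difference is that you carry out the symmetry computation by hand (directly for all $f\in\Gamma(X;F)$, with finite support of $\varphi$ and local finiteness making all sums finite), whereas the paper simply cites Green's formula from \cite[Lemma~2.1]{Sch2} for compactly supported pairs and combines it with $\MM\,\Gamma_c(X;F)\subseteq\Gamma_c(X;F)$.
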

\begin{proof}
 Green's formula, see e.g. \cite[Lemma~2.1]{Sch2}, implies that for $\varphi,\psi \in \Gamma_c(X;F)$ we have
$$(\varphi,\MM \psi) = \sum_{x \in X} \as{\MM \varphi(x),\psi (x)}_x. $$
The local finiteness of $(X,b)$ yields that $\MM$ is continuous (it has finite hopping range) and $\MM \Gamma_c(X;F) \subseteq \Gamma_c(X;F)$. Hence, the above identity shows $(\varphi,\MM \psi) = (\MM \varphi,\psi)$. 

With these   observations the 'In particular'-part follows  from Theorem~\ref{theorem:main}.
\end{proof}

\begin{remark}
  In view of the identification of  $\Gamma_c(X;F')$ with $\Gamma_c(X;F)$, a subspace of $\Gamma(X;F)$ we call a continuous  operator $A\colon\Gamma(X;F) \to \Gamma(X;F)$ {\em symmetric} if its dual operator satisfies $A' = A|_{\Gamma_c(X;F)}$. In the previous lemma we proved that any magnetic Schrödinger operator is symmetric  and it is not hard to prove that any continuous symmetric operator is indeed a magnetic Schrödinger operator. 
\end{remark}

\begin{lemma}[Kernel of $q_V$]\label{lemma:kernel qv}
 Let $V\colon X \to \R$. If $q_V$ is nonnegative, then for every $h \in C_c(X)$ with $q_V(h) = 0$ the set $\{h \neq 0\}$ is a union of connected components. In particular, if all connected components of $(X,b)$ are infinite, every such $h$ has to vanish.
\end{lemma}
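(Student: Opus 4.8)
The plan is to reduce to a nonnegative minimizer and then exploit that the off-diagonal weights $b(z,y)$ are nonnegative. First, I would replace $h$ by its modulus. The potential term of $q_V$ depends only on $|h|$, while the reverse triangle inequality $|h(x)-h(y)| \ge \big|\, |h(x)|-|h(y)| \,\big|$ shows that the edge term does not increase when passing from $h$ to $|h|$. Hence $0 = q_V(h) \ge q_V(|h|) \ge 0$, so that $u := |h|$ is a nonnegative element of $C_c(X)$ with $q_V(u) = 0$ and $\{u \neq 0\} = \{h \neq 0\}$. It therefore suffices to prove the claim for $u$.

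Second, I would use that, since $q_V \ge 0$ on $C_c(X)$ and $q_V(u)=0$, the function $u$ is a global minimizer of $q_V$, and test this minimality against indicator functions at vertices outside the support $S := \{u \neq 0\}$. Fix $z \in X \setminus S$, so $u(z)=0$, and expand the real quadratic polynomial
$$t \longmapsto q_V(u + t\mathbf{1}_{\{z\}}) = 2t\, q_V(u,\mathbf{1}_{\{z\}}) + t^2\, q_V(\mathbf{1}_{\{z\}}),$$
where $q_V(\cdot,\cdot)$ denotes the associated symmetric bilinear form. A direct computation (Green's formula, \cite[Lemma~2.1]{Sch2}) together with $u(z)=0$ gives $q_V(u,\mathbf{1}_{\{z\}}) = \LL_V u(z) = -\sum_{y} b(z,y) u(y)$. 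Since this polynomial is nonnegative for all $t \in \R$ and vanishes at $t=0$, its linear coefficient must vanish, so that $\sum_{y} b(z,y) u(y) = 0$.

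Third, here the nonnegativity of $u$ enters decisively: each summand $b(z,y)u(y)$ is nonnegative, hence all of them vanish, which forces $b(z,y) = 0$ for every $y \in S$. Thus no edge joins $X \setminus S$ to $S$, and this is precisely the statement that $S$ is saturated under the neighbor relation, i.e.\ a union of connected components. This proves the first assertion. For the \emph{in particular} part, $S = \{h \neq 0\}$ is finite because $h \in C_c(X)$; a finite set that is a union of connected components, all of which are assumed infinite, can only be the empty union, whence $h = 0$.

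The step I expect to be the decisive one --- rather than a genuine obstacle --- is the reduction to $u = |h|$ before testing against single vertices. For complex $h$ one would only obtain $\sum_y b(z,y) h(y) = 0$, where cancellation among the complex values $h(y)$ is possible, so nothing about the individual edges could be concluded. It is only the nonnegativity of $u$ that turns the vanishing of the whole sum into the vanishing of each term, which is what encodes the absence of boundary edges and hence the component structure of the support.
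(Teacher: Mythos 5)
Your proof is correct, but it takes a genuinely different route from the paper's. Both arguments begin identically, replacing $h$ by $u=|h|$ via the reverse triangle inequality. From there the paper proceeds through the cutoff inequality $q_V(\varphi 1_{\{h>0\}})\leq q_V(\varphi)$, proved by writing $\varphi 1_{\{h>0\}}=(\varphi\wedge (Mh))\vee(-Mh)$ and using the triangle inequality for the seminorm $q_V^{1/2}$ (an idea the authors credit to \cite{Sch1}); it then applies this inequality to the two-point perturbation $\delta_x+\alpha\delta_y$ with $h(x)>0$, $h(y)=0$, $x\sim y$, rearranges to get $2b(x,y)\leq \alpha q_V(\delta_y)$, and lets $\alpha\to 0+$ to force $b(x,y)=0$. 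You instead exploit that $u$ is a global minimizer of the nonnegative form: the first-variation condition for $t\mapsto q_V(u+t\mathbf{1}_{\{z\}})$ at $t=0$ kills the linear coefficient, Green's formula identifies it as $\LL_V u(z)=-\sum_y b(z,y)u(y)$ for $z$ outside the support, and nonnegativity of each summand eliminates every edge from $X\setminus\{u\neq 0\}$ into $\{u\neq 0\}$. Your variational argument is shorter and bypasses the lattice computation entirely; it even yields the slightly stronger Euler--Lagrange fact that $\LL_V u$ vanishes off the support (and indeed everywhere, if one tests at points of the support as well), exhibiting $u$ as a finitely supported $0$-eigenfunction. What the paper's route buys is the cutoff inequality itself, a more robust tool in Dirichlet-form arguments that does not rely on expanding the form as a polynomial in a perturbation parameter; but for this lemma your direct computation suffices, and your closing observation --- that only the modulus reduction prevents complex cancellation in $\sum_y b(z,y)h(y)$ --- correctly identifies where nonnegativity enters. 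Your handling of the ``in particular'' part (a finite union of infinite components must be empty) matches the intended conclusion.
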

\begin{proof}  As can easily be seen from the definition of the form $q_V$, for any $\varphi \in C_c(X)$ we have $q_V(|\varphi|) \leq q_V(\varphi)$.  Since $0 \leq q_V(|h|) \leq q_V(h) = 0$, it suffices to consider the case $h \geq 0$.  As a first step we show that for any $\varphi \in C_c(X)$ the inequality
\begin{align}\label{inequality:important}
 q_V( \varphi 1_{\{h > 0\}}) \leq q_V(\varphi)\tag{$\heartsuit$}
\end{align}
holds.

Since $\varphi$ has finite support, there exists a constant $M > 0$ such that 
$$ \varphi 1_{\{h > 0\}} = (\varphi \wedge (Mh)) \vee (-Mh).$$
Here $\wedge$ denotes the pointwise minimum of two functions and $\vee$ denotes their pointwise maximum. Using the formula $2 (a \wedge b) = a + b - |a-b|$, $a,b \in \R$, that $q_V$ is a nonnegative quadratic form and $q_V(|\psi|) \leq q_V(\psi)$, $\psi \in C_c(X)$, we obtain
\begin{align*}
 2 q_V(\varphi \wedge (Mh))^{1/2} &\leq  q_V(\varphi + Mh)^{1/2} + q_V(|\varphi - Mh|)^{1/2}\\
 &\leq 2 q_V(\varphi)^{1/2} + 2 q_V(Mh)^{1/2}\\
 &= 2 q_V(\varphi)^{1/2}.  
\end{align*}
This inequality combined with a similar argument for $\varphi \vee (-Mh)$ implies Inequality~\eqref{inequality:important}.
 
 To finish the proof we have to show that for a given  $x \in X$ with $h(x) >  0$ and a given $y \in X$ that is  connected with $x$ we have $h(y) > 0$. By induction we can assume that $y$ is a neighbor of $x$, that is, $b(x,y) > 0$. Suppose that $ h(y) = 0$. We write $\delta_z$ for the function on $X$ that is $1$ at $z$ and $0$ otherwise.  For $\alpha >0$ the definition of $q_V$, $h(y) = 0$ and Inequality~\eqref{inequality:important} yield
 $$q_V(\delta_x) = q_V( (\delta_x + \alpha \delta_y) 1_{\{h > 0\}}) \leq q_V(\delta_x + \alpha \delta_y) = q_V(\delta_x) + \alpha^2 q_V(\delta_y) - 2\alpha b(x,y).$$
 Rearranging and dividing by $\alpha > 0$ shows $2 b(x,y) \leq \alpha q_V(\delta_y)$. Letting $\alpha \to  0+$ yields $b(x,y) = 0$, a contradiction.
\end{proof}

\begin{remark}
  The idea for the  proof  of Inequality~\eqref{inequality:important} and its application in the proof of the previous theorem are taken from \cite{Sch1}. 
\end{remark}

\begin{proof}[Proof of Theorem~\ref{thm:graph}]  Obviously we have
$$- \MM_{b,\Phi,W} =  \MM_{b,-\Phi,-W - 2 {\rm deg}}$$
and $(-W - 2 {\rm deg})_{\rm min}(x) = - \Ws(x) - 2{\rm deg}(x)$. Moreover, $- \MM_{b,\Phi,W}$ is surjective if and only if $\MM_{b,\Phi,W}$ is surjective. Therefore, it suffices to consider the case that $q_{\Wm}$ is nonnegative.

Since $(X,b)$ is assumed to be locally finite,  by Lemma~\ref{lemma:dual operator} it suffices to verify that $\MM|_{\Gamma_c(X;F)}$ is injective. Let $\varphi \in \Gamma_c(X;F)$ with $\MM \varphi = 0$. Lemma~\ref{lemma:domination} and that $q_{\Wm}$ is nonnegative show
 $$0 \leq q_{\Wm}(|\varphi|) \leq (\varphi,\MM \varphi) = 0.$$
 Since the connected components of $(X,b)$ are assumed to be infinite, we infer $|\varphi| = 0$ from Lemma~\ref{lemma:kernel qv} and hence $\varphi = 0$. This finishes the proof. 
\end{proof}

We finish this section with the discussion of a pointwise maximum principle in the scalar case. For $x \in X$ and $n \in \N_0$ we denote by $B_n(x)$ the vertices with combinatorial distance less or equal than $n$ from $x$. As in \cite{Kal} we say that a continuous operator $A\colon C(X)\to C(X)$ satisfies the {\em maximum principle at $x \in X$} if there exists $n \in \N$ such that for every $f \in C(X)$ the identities $Af(x) = 0$ and
$$|f(x)| = \sup_{y \in B_n(x)} |f(y)|$$
 imply $|f(x)| = |f(y)|$ for all $y \in B_n(x)$. Moreover, $A$ is said to satisfy the  {\em pointwise maximum principle} if it satisfies the  maximum principle at every $x \in X$.  We obtain the following characterization of the maximum principle for $\LL_V$.

\begin{proposition}\label{proposition:maximum principle}
  Let $(X,b)$ be locally finite and let $x \in X$. The following assertions are equivalent:
  \begin{enumerate}[(i)]
   \item $\LL_V$ satisfies the maximum principle at $x$.
   \item $V(x) \geq 0$ or $V(x) + 2 {\rm deg}(x) \leq 0$.
  \end{enumerate}
 \end{proposition}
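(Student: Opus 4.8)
The statement is a local, pointwise equivalence between the maximum principle for $\LL_V$ at a fixed vertex $x$ and a sign condition on $V(x)$. The plan is to analyze directly the defining inequality of the maximum principle by testing it against cleverly chosen functions supported near $x$. Since the operator is local (finite hopping range because $(X,b)$ is locally finite) and $\LL_V f(x)$ only involves $f(x)$ and the values $f(y)$ for neighbors $y\sim x$, the relevant distance parameter can be taken to be $n=1$, and all the action happens on $B_1(x) = \{x\}\cup\{y : y\sim x\}$. The key algebraic object is the expression
$$\LL_V f(x) = \sum_{y\in X} b(x,y)(f(x)-f(y)) + V(x)f(x) = (\Deg(x)+V(x))f(x) - \sum_{y\sim x} b(x,y) f(y).$$
So the constraint $\LL_V f(x)=0$ reads $(\Deg(x)+V(x))f(x) = \sum_{y\sim x} b(x,y) f(y)$, and I want to understand when this, together with $|f(x)| = \sup_{y\in B_1(x)}|f(y)|$, forces all neighbor values to have the same modulus as $f(x)$.

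\textbf{Proving (ii) $\Rightarrow$ (i).} Here I would assume the sign condition and derive the maximum principle with $n=1$. Suppose $|f(x)| = \sup_{y\in B_1(x)}|f(y)| =: s$ and $\LL_V f(x)=0$. If $f(x)=0$ the conclusion is vacuous, so assume $s>0$. From the constraint,
$$|\Deg(x)+V(x)|\,s = \Bigl|\sum_{y\sim x} b(x,y) f(y)\Bigr| \le \sum_{y\sim x} b(x,y)|f(y)| \le \Deg(x)\,s.$$
In the case $V(x)\ge 0$ we have $\Deg(x)+V(x)\ge \Deg(x)\ge 0$, so the left side is $(\Deg(x)+V(x))s \ge \Deg(x)s$; combined with the reverse inequality above this forces $V(x)=0$ together with $\sum_{y\sim x} b(x,y)|f(y)| = \Deg(x)s = \sum_{y\sim x}b(x,y)\,s$, whence $|f(y)|=s$ for every neighbor $y$ (those with $b(x,y)>0$), which is exactly the conclusion. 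In the case $V(x)+2\Deg(x)\le 0$ we have $\Deg(x)+V(x)\le -\Deg(x)\le 0$, so $|\Deg(x)+V(x)| = -(\Deg(x)+V(x)) \ge \Deg(x)$, and the same squeezing again forces equality throughout and $|f(y)|=s$ for all neighbors. The slight subtlety is handling the triangle-inequality equality case and the possibility $\Deg(x)=0$, but since all components are infinite these degenerate points do not obstruct the argument; I would note that when $\Deg(x)=0$ the maximum principle holds trivially.

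\textbf{Proving (i) $\Rightarrow$ (ii).} This is the direction I expect to be the main obstacle, because it requires \emph{constructing} a test function that breaks the maximum principle whenever the sign condition fails, i.e.\ whenever $V(x)<0$ and $V(x)+2\Deg(x)>0$, equivalently $-2\Deg(x) < V(x) < 0$, equivalently $0 < \Deg(x)+V(x) < \Deg(x)$ in absolute terms (more precisely $|\Deg(x)+V(x)| < \Deg(x)$). I would prove the contrapositive: assuming $V(x)<0$ and $V(x)+2\Deg(x)>0$, I produce $f\in C(X)$ with $\LL_V f(x)=0$ and $|f(x)| = \sup_{y\in B_n(x)}|f(y)|$ for \emph{every} $n$, yet with some neighbor value strictly smaller in modulus. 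The natural choice is to set $f(x)=1$, distribute the neighbor values so as to satisfy the constraint $\sum_{y\sim x}b(x,y)f(y) = (\Deg(x)+V(x))\cdot 1 = \Deg(x)+V(x)$, and extend $f$ by $0$ outside $B_1(x)$. Because $0 < \Deg(x)+V(x) < \Deg(x)$, I can choose all neighbor values in $[0,1)$ (for instance the constant value $t := (\Deg(x)+V(x))/\Deg(x) \in (0,1)$ at each neighbor, which gives $\sum_y b(x,y)t = t\,\Deg(x) = \Deg(x)+V(x)$ as required), so $|f(x)|=1 = \sup_{B_n(x)}|f|$ while $|f(y)| = t < 1$ for neighbors. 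This violates the defining conclusion of the maximum principle for every $n$, so (i) fails. The one thing to check carefully is that this single function simultaneously defeats the maximum principle for \emph{all} $n$, not just $n=1$, since the definition quantifies existentially over $n$; because $f$ is supported in $B_1(x)$ and $\sup_{B_n(x)}|f| = 1 = |f(x)|$ for all $n$, the same $f$ works uniformly, closing the argument.
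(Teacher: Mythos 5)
Your proof is correct in substance and follows essentially the same route as the paper: for (i) $\Rightarrow$ (ii) you construct exactly the paper's test function ($1$ at $x$, the value $\beta = ({\rm deg}(x)+V(x))/{\rm deg}(x)$ at neighbors, $0$ elsewhere), and your triangle-inequality squeeze for (ii) $\Rightarrow$ (i) is the paper's computation carried out inline rather than factored through its Lemma on nonnegative functions (which the paper applies once to $|f|$ via $\LL_V f = 0 \Rightarrow \LL_V|f| \leq 0$ and once after the reflection $V \mapsto -(V+2\,{\rm deg})$). One slip worth correcting: your claim that $0 < {\rm deg}(x)+V(x)$, hence $t \in (0,1)$ with neighbor values chosen in $[0,1)$, is false in general --- the hypotheses $V(x) < 0 < V(x)+2\,{\rm deg}(x)$ yield only $|{\rm deg}(x)+V(x)| < {\rm deg}(x)$ (which you in fact state parenthetically), so $t \in (-1,1)$; for instance $V(x) = -\tfrac{3}{2}{\rm deg}(x)$ gives $t = -\tfrac{1}{2}$, and then no nonnegative choice of neighbor values can satisfy the constraint $\sum_{y\sim x} b(x,y)f(y) = {\rm deg}(x)+V(x) < 0$. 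Your construction survives verbatim, since the violation of the maximum principle only uses $|t| < 1$ (the paper accordingly asserts $|\beta| < 1$, not $\beta \in (0,1)$); one should just replace $|f(y)| = t$ by $|f(y)| = |t|$ in the final check. Also, your appeal to ``all components are infinite'' when treating ${\rm deg}(x) = 0$ invokes a hypothesis the proposition does not have; it is also unnecessary, since, as you then correctly observe, the maximum principle at an isolated vertex is trivial ($B_1(x) = \{x\}$) and the contrapositive hypotheses in (i) $\Rightarrow$ (ii) force ${\rm deg}(x) > 0$ anyway.
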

 
 Before proving this proposition we note the following elementary observation. 
 
 \begin{lemma}\label{lemma:maximum principle}
  Let $(X,b)$ be locally finite, let $f \in C(X)$ nonnegative and  let $x \in X$. If $V(x) \geq 0$, then  $\LL_V f(x)  \leq 0$ and 
  $$f(x) = \sup_{y \in B_1(x)} f(y)$$
  imply $f(x) = f(y)$ for all $y \in B_1(x)$.
 \end{lemma}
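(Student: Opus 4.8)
The plan is to expand $\LL_V f(x)$ according to its definition and to observe that, under the stated hypotheses, it is a sum of manifestly nonnegative terms. Since the whole expression is assumed to be $\leq 0$, each term must then vanish individually, and this forces $f$ to be constant on $B_1(x)$.

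First I would write
$$\LL_V f(x) = \sum_{y \sim x} b(x,y)\bigl(f(x) - f(y)\bigr) + V(x) f(x),$$
where the sum ranges over the finitely many neighbors of $x$ (finiteness by local finiteness of $(X,b)$). The hypothesis $f(x) = \sup_{y \in B_1(x)} f(y)$ gives $f(x) \geq f(y)$ for every neighbor $y$, so each summand $b(x,y)\bigl(f(x)-f(y)\bigr)$ is nonnegative, because $b(x,y) \geq 0$. The nonnegativity of $f$ together with $V(x) \geq 0$ makes the potential term $V(x) f(x)$ nonnegative as well.

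Combining these observations with the assumption $\LL_V f(x) \leq 0$, I would conclude that every term is in fact exactly zero. In particular $b(x,y)\bigl(f(x) - f(y)\bigr) = 0$ for each summand; for an actual neighbor $y \sim x$ we have $b(x,y) > 0$ by definition, hence $f(x) = f(y)$. Since $B_1(x)$ consists of $x$ itself (where the asserted equality is trivial) together with its neighbors, this yields $f(x) = f(y)$ for all $y \in B_1(x)$, as claimed.

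There is no serious obstacle here: the argument is a pure sign-counting observation, namely that a finite sum of nonnegative terms which is $\leq 0$ must have all terms equal to zero. The only point deserving a word is the finiteness of the defining sum, which is precisely what the local finiteness assumption guarantees and which legitimizes this term-by-term conclusion.
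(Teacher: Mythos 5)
Your proof is correct and follows essentially the same route as the paper: expand $\LL_V f(x)$, use $f \geq 0$ and $V(x) \geq 0$ to discard the potential term, and note that the remaining sum $\sum_{y} b(x,y)\bigl(f(x)-f(y)\bigr)$ has only nonnegative summands (by the supremum hypothesis) yet is $\leq 0$, forcing each to vanish. Your write-up merely makes explicit the term-by-term conclusion that the paper leaves as ``the claim follows immediately.''
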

 \begin{proof}
 The nonnegativity of $f$ and $V(x) \geq 0$ yield
  $$0 \geq \LL_V f(x) = \sum_{y \in X} b(x,y)(f(x)- f(y)) + V(x) f(x) \geq \sum_{y \in X} b(x,y)(f(x)- f(y)).$$
 From this inequality the claim follows immediately.
 \end{proof}
 \begin{proof}[Proof of Proposition~\ref{proposition:maximum principle}]
  (ii) $\Rightarrow$ (i): If $V(x) \geq 0$ the statement follows directly from the previous lemma as $\LL_V f = 0$ implies $\LL_V|f| \leq 0$. For the case $V(x) + 2 {\rm deg}(x) \leq 0$ we argue as follows. Let $f \in C(X)$ with $\LL_V f(x) = 0$ and $|f(x)| = \sup_{y \in B_1(x)} |f(y)|$ be given. Without loss of generality we assume $f(x) \geq 0$. The properties of $f$ yield
  \begin{align*}
  0 =  - \LL_V f(x) &= \sum_{y \in X} b(x,y) (f(y) - f(x)) - V(x) f(x)\\
   &= \sum_{y \in X} b(x,y) (f(y) + f(x)) - (V(x) + 2 {\rm deg}(x)) f(x)\\
   &\geq  \sum_{y \in X} b(x,y) (|f(x)| - |f(y)|) - (V(x) + 2 {\rm deg}(x)) |f(x)|\\
   &= \LL_{-(V + 2{\rm deg})} |f|(x).
  \end{align*}
  With this at hand, the statement follows from Lemma~\ref{lemma:maximum principle}.

%
%
  
  (i) $\Rightarrow$ (ii): Suppose that (ii) does not hold. Then $V(x) < 0 < V(x) + 2 {\rm deg}(x)$.  For $\beta \in \R$ we consider $f_\beta \in C(X)$ defined by 
 $$f_\beta(y) =  \begin{cases}1  &\text{if } y = x\\
 \beta   &\text{if } y \sim x\\
 0 &\text{else}
   \end{cases}.
$$
 It satisfies
 $$\LL_V f_\beta(x) = (1 -  \beta) {\rm deg} (x)  +  V(x).  $$
 In particular, for $\beta =  (V(x) + {\rm deg}(x))/{\rm deg}(x)$ we have $\LL_V f_\beta(x) = 0$. The bounds on $V(x)$ yield $|\beta| < 1$, so that $|f_\beta|$ attains its global maximum at $x$. Hence, $f_\beta$ violates the maximum principle at $x$. 
 \end{proof}

 With the characterization of the maximum principle we also recover the main result from \cite{Kal}  for the Schrödinger operator $\LL_V$ and make it somewhat more explicit.
 
\begin{corollary}
 If $(X,b)$ is locally finite, all connected components are infinite and for all $x \in X$ either $V(x) \geq 0$ or $V(x) + 2{\rm deg}(x) \leq 0$, then $\LL_V$ is surjective. 
\end{corollary}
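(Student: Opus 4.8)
The plan is to deduce this corollary from Lemma~\ref{lemma:dual operator} and Proposition~\ref{proposition:maximum principle}, rather than from Theorem~\ref{thm:graph} directly. The reason for avoiding Theorem~\ref{thm:graph} is a genuine one: the hypothesis here only asks that at \emph{each} vertex one of the two alternatives $V(x) \geq 0$ or $V(x) + 2{\rm deg}(x) \leq 0$ holds, and the alternative realized may differ from vertex to vertex, even within a single connected component. Consequently neither global form $q_V$ nor $q_{-V - 2{\rm deg}}$ need be nonnegative, so condition (3) of Theorem~\ref{thm:graph} is not available (and since $\LL_V$ decomposes as a product over connected components, one cannot repair this by a componentwise sign flip either). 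I would therefore work directly with the surjectivity criterion.

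First I would note that $\LL_V$ is the scalar special case of a magnetic Schrödinger operator ($F = (\IC)$, trivial connection, $W$ acting as multiplication by $V$), so Lemma~\ref{lemma:dual operator} applies: since $(X,b)$ is locally finite, $\LL_V$ is continuous and it suffices to prove that $\LL_V|_{C_c(X)}$ is injective. So let $f \in C_c(X)$ with $\LL_V f = 0$, and suppose toward a contradiction that $f \neq 0$. As $f$ has finite support and is not identically zero, $|f|$ attains a strictly positive maximum $M := \max_{x \in X} |f(x)| > 0$, say at a vertex $x_0$.

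Next I would propagate this maximum across the connected component of $x_0$ using Proposition~\ref{proposition:maximum principle}. By assumption the pointwise dichotomy holds at every vertex, so by that proposition $\LL_V$ satisfies the maximum principle at every vertex, and its proof of the implication (ii) $\Rightarrow$ (i) establishes this with $n = 1$. Applying the maximum principle at $x_0$: since $\LL_V f(x_0) = 0$ and $|f(x_0)| = M = \sup_{y \in B_1(x_0)} |f(y)|$ (because $M$ is the global maximum), we conclude $|f(y)| = M$ for every neighbor $y$ of $x_0$. Each such $y$ again realizes the global maximum and satisfies $\LL_V f(y) = 0$, so iterating this step and proceeding by induction along paths shows $|f| \equiv M$ on the entire connected component containing $x_0$.

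Finally I would reach the contradiction. By hypothesis every connected component of $(X,b)$ is infinite, so $|f| \equiv M > 0$ on an infinite set, which is incompatible with $f \in C_c(X)$ having finite support. Hence $f = 0$, so $\LL_V|_{C_c(X)}$ is injective and, by Lemma~\ref{lemma:dual operator}, $\LL_V$ is surjective. The only genuinely delicate point is the conceptual one at the start, namely recognizing that the statement does \emph{not} fall under Theorem~\ref{thm:graph} and must instead be routed through dual injectivity; once that route is chosen, the maximum-principle propagation is entirely routine.
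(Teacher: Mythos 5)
Your proposal is correct and follows essentially the same route as the paper: its proof likewise reduces to injectivity of $\LL_V|_{C_c(X)}$ via Lemma~\ref{lemma:dual operator} (``as in the proof of Theorem~\ref{thm:graph}'') and rules out any nonzero $f \in C_c(X)$ with $\LL_V f = 0$ by combining the pointwise maximum principle from Proposition~\ref{proposition:maximum principle} with the infinitude of the connected components. You merely spell out the maximum-propagation argument that the paper dismisses as clear, and your opening observation that Theorem~\ref{thm:graph} itself is not applicable here agrees with the paper's own remark following the corollary.
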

\begin{proof}
 The pointwise maximum principle for $\LL_V$ and that connected components of $(X,b)$ are infinite clearly imply that any $f \in C_c(X)$ with $\LL_V f = 0$ must vanish. Hence, we can deduce the surjectivity of $\LL_V$ as in the proof of Theorem~\ref{thm:graph}.
\end{proof}
\begin{remark}
\begin{enumerate}[(a)]
\item The main result of \cite{Kal} states that a continuous linear operator $A$ on $C(X)$ is surjective if it satisfies the pointwise maximum principle (with respect to a locally finite connected infinite graph). Hence, the previous corollary is the main result of \cite{Kal}  applied to the Schrödinger operator $\LL_V$. For other continuous operators on $C(X)$ the  pointwise maximum principle cannot be easily characterized.
 \item  As remarked above after Theorem~\ref{thm:graph}, our form criterion in Theorem~\ref{thm:graph} can also treat certain $V$ in the regime $V + 2 {\rm deg} > 0 >V$, which are not covered by this corollary. However, note that our form criterion does not cover the case when $V(x) \geq 0$ for some $x \in X$ and $V(x) + 2 {\rm deg}(x) \leq 0$ for some $x \in X$.  
\end{enumerate}

\end{remark}

\section{Examples}\label{section:examples}
 
 In this section we  illustrate with several examples that none of the assumptions of Theorem~\ref{thm:graph} can be dropped for inferring surjectivity. 
 
 The necessity of infinite connected components can be seen as follows. If $X$ is finite (or $(X,b)$ is locally finite and has at least one finite connected component),  the constant functions (the functions that are constant on finite connected components and vanish elsewhere) are finitely supported  eigenfunctions to the eigenvalue $0$ for the graph Laplacian $\LL = \LL_0$. Hence, on such graphs the Laplacian $\LL$ can not be surjective.

 The following example shows that local finiteness is also essential for surjectivity. 
 \begin{example}[Infinite Star]
  Let $X= \N_0$ and let $(b_n)_{n \in \N}$ be a sequence of nonnegative numbers with $\sum_{n} b_n < \infty$. We define $b\colon\N_0 \times \N_0 \to [0,\infty)$  by $b(n,0) = b(0,n) = b_n$, $n \in \N$, and $b(n,m) = 0$, otherwise. In this case, we have $D(\LL) = \{f \in C(\N_0) \mid \sum_n b_n |f(n)| < \infty\}$.
  
  Now suppose that $g \in D(\LL)$ and $f \in C(X)$ satisfy $\LL g = f$. For $n \in \N$ we obtain
  $$f(n) = \LL g (n) =  b_n (g(n) - g(0)). $$
  Since  $g \in D(\LL)$, this implies $f \in \ell^1(\N_0)$. Moreover, for $n = 0$ we get
  $$f(0) = \LL g (0) = \sum_{n \in \N} b_n(g(0) - g(n)). $$
  Substituting the first identity into the second yields that
  $$f(0) = - \sum_{n \in\N} f(n)$$
  is a necessary condition for $f$ to lie in the image of $\LL$. It is readily verified that this condition is also sufficient, that is, 
  $$\LL C(\N_0) = \{ f \in \ell^1(\N_0) \mid f(0) = -  \sum_{n \in\N} f(n)\}.$$

 \end{example}
 
 The last example in this section shows that the nonnegativity of $q_{\Wm}$ cannot be dropped in Theorem~\ref{thm:graph}. The reason behind this is the existence of finitely supported eigenfunctions on certain locally finite graphs. If $\varphi \in C_c(X)$ satisfies $\LL\varphi = \lambda \varphi$ for some $\lambda > 0$, then $\LL - \lambda = \LL_{-\lambda}$ is not surjective by Lemma~\ref{lemma:dual operator}.

 \begin{example} 
  According to the discussion preceding this example, we construct a  locally finite graph with infinite connected components that admits finitely supported eigenfunctions for the Laplacian $\LL = \LL_0$.  We first consider the following finite graph with standard weights.

 \begin{center}
 \begin{tikzpicture}[scale=0.5]
\coordinate[label=left:$b_1$] (A) at (0,0);
\coordinate[label=right:$b_3$] (B) at (9,0);
\coordinate[label=below:$b_5$] (C) at (4.5,{-sqrt(243)/2});
\coordinate[label=left:$b_6$] (D) at (0,{-sqrt(27)});
\coordinate[label=right:$b_4$] (E) at (9,{-sqrt(27)});
\coordinate[label=above:$b_2$] (F) at (4.5,{sqrt(27)/2});
\coordinate[label=below right:$a_1$] (a) at ($(A)!1/3!(B)$);
\coordinate[label=below left:$a_2$] (b) at ($(A)!2/3!(B)$);
\coordinate[label=left:$a_3$] (c) at ($(F)!2/3!(E)$);
\coordinate[label=above left:$a_4$] (d) at ($(B)!2/3!(C)$);
\coordinate[label=above right:$a_5$] (e) at ($(E)!2/3!(D)$);
\coordinate[label=right:$a_6$] (f) at ($(D)!1/3!(F)$);

\draw (A) -- (B);
\draw (B) -- (C);
\draw (C) -- (A);
\draw (D) -- (E);
\draw (D) -- (F);
\draw (E) -- (F);

\fill (A) circle (4pt);
\fill (B) circle (4pt);
\fill (C) circle (4pt);
\fill (D) circle (4pt);
\fill (E) circle (4pt);
\fill (F) circle (4pt);
\fill (a) circle (4pt);
\fill (b) circle (4pt);
\fill (c) circle (4pt);
\fill (d) circle (4pt);
\fill (e) circle (4pt);
\fill (f) circle (4pt);

 \end{tikzpicture}
 \end{center}
 
More precisely, we let  $X_1  = \{a_1,\ldots,a_6,b_1,\ldots, b_6\}$  and define $b_1\colon X_1 \times X_1 \to \{0,1\}$ by $b_1(x,y) = 1$ if there is an edge   between $x$ and $y$ in the picture above, and $b_1(x,y) = 0$, otherwise. The function $\varphi\colon X_1 \to \R$ given by $\varphi(b_i) = 0$ and $\varphi(a_i) = (-1)^i$, $i = 1,\ldots,6,$ satisfies $\LL^1 \varphi  = 6 \varphi$, where $\LL^1$ is the Laplacian of the weighted graph $(X_1,b_1)$. This is an eigenfunction that is supported on the 'interior' but not on the 'boundary' of $(X_1,b_1)$.   

Next we glue an infinite graph to the vertex $b_1$. We let  $(X_2,b_2)$ be a  locally finite graph with infinite connected components and choose $o \in X_2$. We define $X: = X_1 \sqcup X_2$ and define $b\colon X \times X \to [0,\infty)$  through $b|_{X_1 \times X_1} = b_1, b|_{X_2 \times X_2} = b_2, b(o,b_1) = b(b_1,o) = 1$ and $b(x,y) = 0$, otherwise. 

The so constructed graph $(X,b)$ is locally finite and has infinite connected components. If we denote by $\psi$ the function with $\psi = \varphi$ on $X_1$ and $\varphi = 0$ on $X_2$, it is readily verified that $\LL \psi = 6 \psi$, where $\LL$ is the Laplacian of $(X,b)$.
 \end{example}

 \begin{remark}
  Note that nonnegativity of $q_{V}$ or $q_{-V - 2 {\rm deg}}$  is not necessary for the surjectivity of $\LL_V$. There are infinite connected locally finite planar graphs whose Laplacians do not admit finitely supported eigenfunctions, see \cite{Kel}. In this case, for each $\lambda \in \R$ the restriction of the operator $\LL - \lambda = \LL_{-\lambda}$ to $C_c(X)$ is injective and hence $\LL_{-\lambda}$ is surjective.  Moreover, such $\lambda$ can always be chosen such that neither the form $q_{-\lambda}$ nor the form $q_{\lambda -2 {\rm deg}}$ are nonnegative. For example, $\lambda$ in the spectrum of the restriction of $\LL$ to $\ell^2(X)$ that do not lie at the edges of the spectrum have this property. We refrain from giving details. 
 \end{remark}

 \bibliographystyle{plain}
 
\bibliography{literatur}

\begin{thebibliography}{10}

\bibitem{BKW}
Frank Bauer, Matthias Keller, and Rados\l aw~K. Wojciechowski.
\newblock Cheeger inequalities for unbounded graph {L}aplacians.
\newblock {\em J. Eur. Math. Soc. (JEMS)}, 17(2):259--271, 2015.

\bibitem{CC1}
Tullio Ceccherini-Silberstein and Michel Coornaert.
\newblock A note on {L}aplace operators on groups.
\newblock In {\em Limits of graphs in group theory and computer science}, pages
  37--40. EPFL Press, Lausanne, 2009.

\bibitem{CC2}
Tullio Ceccherini-Silberstein and Michel Coornaert.
\newblock {\em Cellular automata and groups}.
\newblock Springer Monographs in Mathematics. Springer-Verlag, Berlin, 2010.

\bibitem{CCD}
Tullio Ceccherini-Silberstein, Michel Coornaert, and J\'{o}zef Dodziuk.
\newblock The surjectivity of the combinatorial {L}aplacian on infinite graphs.
\newblock {\em Enseign. Math. (2)}, 58(1-2):125--130, 2012.

\bibitem{Eid}
Meier Eidelheit.
\newblock Zur {T}heorie der {S}ysteme linearer {G}leichungen.
\newblock {\em Studia Math.}, 6:139--148, 1936.

\bibitem{GMT}
Batu G\"{u}neysu, Ognjen Milatovic, and Fran\c{c}oise Truc.
\newblock Generalized {S}chr\"{o}dinger semigroups on infinite graphs.
\newblock {\em Potential Anal.}, 41(2):517--541, 2014.

\bibitem{Jar}
Hans Jarchow.
\newblock {\em Locally convex spaces}.
\newblock B. G. Teubner, Stuttgart, 1981.
\newblock Mathematische Leitf\"{a}den. [Mathematical Textbooks].

\bibitem{Kal}
T.~Kalmes.
\newblock A short remark on the surjectivity of the combinatorial {L}aplacian
  on infinite graphs.
\newblock {\em Rev. R. Acad. Cienc. Exactas F\'{i}s. Nat. Ser. A Math. RACSAM},
  110(2):695--698, 2016.

\bibitem{Kel}
Matthias Keller.
\newblock Curvature, geometry and spectral properties of planar graphs.
\newblock {\em Discrete Comput. Geom.}, 46(3):500--525, 2011.

\bibitem{KPP2}
Matthias Keller, Yehuda Pinchover, and Felix Pogorzelski.
\newblock An improved discrete {H}ardy inequality.
\newblock {\em Amer. Math. Monthly}, 125(4):347--350, 2018.

\bibitem{KPP1}
Matthias Keller, Yehuda Pinchover, and Felix Pogorzelski.
\newblock Optimal {H}ardy inequalities for {S}chr\"{o}dinger operators on
  graphs.
\newblock {\em Comm. Math. Phys.}, 358(2):767--790, 2018.

\bibitem{MO1}
S.~Mazur and W.~Orlicz.
\newblock Sur les espaces m\'{e}triques lin\'{e}aires. {I}.
\newblock {\em Studia Math.}, 10:184--208, 1948.

\bibitem{MO2}
S.~Mazur and W.~Orlicz.
\newblock Sur les espaces m\'{e}triques lin\'{e}aires. {II}.
\newblock {\em Studia Math.}, 13:137--179, 1953.

\bibitem{MV}
Reinhold Meise and Dietmar Vogt.
\newblock {\em Introduction to functional analysis}, volume~2 of {\em Oxford
  Graduate Texts in Mathematics}.
\newblock The Clarendon Press, Oxford University Press, New York, 1997.
\newblock Translated from the German by M. S. Ramanujan and revised by the
  authors.

\bibitem{Sch1}
Marcel Schmidt.
\newblock {\em {Energy forms}}.
\newblock PhD thesis, Friedrich-Schiller-Universit{\"a}t Jena, 2016.

\bibitem{Sch2}
Marcel Schmidt.
\newblock On the existence and uniqueness of self-adjoint realizations of
  discrete (magnetic) {S}chrödinger operators.
\newblock {\em Preprint, arXiv:1805.08446}, 2018.

\bibitem{Toi}
Matthew C.~H. Tointon.
\newblock Characterizations of algebraic properties of groups in terms of
  harmonic functions.
\newblock {\em Groups Geom. Dyn.}, 10(3):1007--1049, 2016.

\bibitem{Wen}
Jochen Wengenroth.
\newblock {\em Derived functors in functional analysis}, volume 1810 of {\em
  Lecture Notes in Mathematics}.
\newblock Springer-Verlag, Berlin, 2003.

\end{thebibliography}
\end{document}